\journal{\ldots}
\newtheorem{theorem}{Theorem}[section]
\newtheorem{remark}{Remark}[section]
\newtheorem{corollary}{Corollary}[section]
\numberwithin{equation}{section}
\renewcommand{\Re}{\mathop{\mathrm{Re}}\nolimits}
\renewcommand{\Im}{\mathop{\mathrm{Im}}\nolimits}
\newcommand{\sgn}{\mathop{\mathrm{sgn}}\nolimits}
\renewcommand{\div}{\mathop{\mathrm{div}}\nolimits}
\definecolor{dgreen}{rgb}{0,0.5,0}
\definecolor{dblue}{rgb}{0,0,0.6}
\definecolor{dred}{rgb}{0.784,0,0}
\definecolor{delete}{cmyk}{0.5,0,0,0}
\begin{document}
\begin{frontmatter}
\title{Uncertainty Relations in the Framework of Equalities}
%
%
%
\author[WAP]{Tohru Ozawa}
\author[WPP]{Kazuya Yuasa}
\address[WAP]{Department of Applied Physics, Waseda University, Tokyo 169-8555, Japan}
\address[WPP]{Department of Physics, Waseda University, Tokyo 169-8555, Japan}

\begin{abstract}
We study the Schr\"odinger-Robertson uncertainty relations in an algebraic framework.
Moreover, we show that some specific commutation relations imply new equalities, which are regarded as equality versions of well-known inequalities such as Hardy's inequality.
\end{abstract}

\begin{keyword}
uncertainty relations
\MSC[2010]
81S05 
\sep 
26D 
\sep 
46C 
\end{keyword}
\end{frontmatter}

\section{Introduction}
In this paper, we study the Schr\"odinger-Robertson uncertainty relations as corollaries of equalities in a scalar product space.
Moreover, we give a number of characterizations in the case where the associated inequalities are in fact equalities.
Our presentation is based exclusively on an algebraic observation on the standard Cauchy-Schwarz inequality and could presumably provide a clear and explicit understanding of uncertainty relations from the point of view of orthogonality.
As applications, we show that some specific commutation relations, in the Hilbert space $L^2(\mathbb{R}^n)$ of square integrable functions on the Euclidean space $\mathbb{R}^n$ of dimensions $n$, imply new norm equalities in $L^2(\mathbb{R}^n)$, which are regarded as equality versions of well-known inequalities such as dilation and Hardy type inequalities.
In particular, we give a method of recognizing Hardy type inequalities in the framework of commutation relations of operators.

This paper is organized as follows.
In Section \ref{sec:UncertaintyRelations}, we characterize the Schr\"odinger-Robertson uncertainty relations in the framework of equalities in a scalar product space.
In Section \ref{sec:Applications}, we give a number of examples of uncertainty relations on the basis of equalities in $L^2(\mathbb{R}^n)$.
In the Appendix, we summarize basic theorems on the Cauchy-Schwarz inequality in an algebraic setting.

Throughout the paper, $H$ denotes a complex vector space endowed with scalar product $(\,\cdot\,|\,\cdot\,):H\times H\ni(u,v)\mapsto(u|v)\in\mathbb{C}$, which is linear (resp.\ antilinear) in the first (resp.\ second) variable.
The associated norm is defined by $\|u\|=(u|u)^{1/2}$, $u\in H$.

There is a large literature on the uncertainty relations.
We refer the readers to \cite{ref:FollandSitaram1997,ref:FuruichiYanagi2012,ref:Dang2013,ref:KoYoo2014,ref:TawfikDiab2014} and references therein.

\section{Uncertainty Relations}
\label{sec:UncertaintyRelations}
Let $A$ and $B$ be symmetric operators in $H$ with domains $D(A)$ and $D(B)$, respectively.
In this and next sections, we use the terminology of operator theory (see \cite{ref:GustafsonSigal,ref:ReedSimonII} for instance).

According to Mourre \cite{ref:Mourre1981}, we define the commutator $[A,B]$ as a sesquilinear form on $H\times H$ by
\begin{equation}
([A,B]\varphi|\psi)
=(B\varphi|A\psi)
-(A\varphi|B\psi),\qquad
\varphi,\psi\in M\equiv D(A)\cap D(B).
\label{eqn:[A,B]}
\end{equation}
It coincides with the usual definition $AB-BA$ on $D(AB)\cap D(BA)$, which is smaller than $M$.
In this paper, we adopt the definition (\ref{eqn:[A,B]}) to avoid the domain problem as much as possible (see \cite{ref:Kosaki2006}) and assume that
\begin{equation}
M\neq\{0\}
\end{equation}
to avoid trivial cases.

Similarly, we define the anticommutator $\{A,B\}$ by
\begin{equation}
(\{A,B\}\varphi|\psi)
=(B\varphi|A\psi)
+(A\varphi|B\psi),\qquad
\varphi,\psi\in M.
\end{equation}
It is straightforward to verify that for all $\varphi\in M$,
\begin{align}
&\bullet\ \ %
([A,B]\varphi|\varphi)\in i\mathbb{R},
\\[2truemm]
&\hphantom{{}\bullet{}}\ \ %
([A,B]\varphi|\varphi)
=-2i\Im(A\varphi|B\varphi)
=2i\Re i(A\varphi|B\varphi)
\nonumber\\
&\hphantom{{}\bullet\ \ %
([A,B]\varphi|\varphi)}
=2i\Im(B\varphi|A\varphi)
=-2i\Re i(B\varphi|A\varphi).
\label{eqn:2.5}
\\[2truemm]
&\bullet\ \ %
(\{A,B\}\varphi|\varphi)\in\mathbb{R},
\\[2truemm]
&\hphantom{{}\bullet{}}\ \ %
(\{A,B\}\varphi|\varphi)
=2\Re(A\varphi|B\varphi)
=2\Im i(A\varphi|B\varphi)
\nonumber\\
&\hphantom{{}\bullet\ \ %
(\{A,B\}\varphi|\varphi)}
=2\Re(B\varphi|A\varphi)
=2\Im i(B\varphi|A\varphi).
\\[2truemm]
&\bullet\ \ %
(A\varphi|B\varphi)
=\frac{1}{2}
(\{A,B\}\varphi|\varphi)
-\frac{1}{2}
([A,B]\varphi|\varphi).
\\[2truemm]
&\bullet\ \ %
(B\varphi|A\varphi)
=\frac{1}{2}
(\{A,B\}\varphi|\varphi)
+\frac{1}{2}
([A,B]\varphi|\varphi).
\end{align}

We now summarize algebraic identities related to the Schr\"odinger-Robertson uncertainty relations.
The identities below are the direct consequences of the theorems in the Appendix.

\begin{theorem}
Let $\varphi\in M$satisfy $A\varphi\neq0$, $B\varphi\neq0$.
Then,
\begin{align}
\pm i([A,B]\varphi|\varphi)
&=\|A\varphi\|\|B\varphi\|\left(
2-\left\|
\frac{A\varphi}{\|A\varphi\|}
\mp i\frac{B\varphi}{\|B\varphi\|}
\right\|^2
\right),
\stepcounter{equation}\tag*{(\theequation)$_\pm$}
\label{eqn:2.10}
\\
\pm(\{A,B\}\varphi|\varphi)
&=\|A\varphi\|\|B\varphi\|\left(
2-\left\|
\frac{A\varphi}{\|A\varphi\|}
\mp\frac{B\varphi}{\|B\varphi\|}
\right\|^2
\right),
\stepcounter{equation}\tag*{(\theequation)$_\pm$}
\\
|(A\varphi|B\varphi)|
&=\frac{1}{2}\,\Bigl(
|([A,B]\varphi|\varphi)|^2
+|(\{A,B\}\varphi|\varphi)|^2
\Bigr)^{1/2}
\nonumber\\
&=\|A\varphi\|\|B\varphi\|
\left[
\left(
1-\frac{1}{2}\left\|
\frac{A\varphi}{\|A\varphi\|}
+e^{i\theta}
\frac{B\varphi}{\|B\varphi\|}
\right\|^2
\right)^2
+
\left(
1-\frac{1}{2}\left\|
\frac{A\varphi}{\|A\varphi\|}
\pm ie^{i\theta}
\frac{B\varphi}{\|B\varphi\|}
\right\|^2
\right)^2
\right]^{1/2}
\nonumber\\
&=\|A\varphi\|\|B\varphi\|
\left(
1-\frac{1}{2}\left\|
\frac{A\varphi}{\|A\varphi\|}
-[\sgn(A\varphi|B\varphi)]
\frac{B\varphi}{\|B\varphi\|}
\right\|^2
\right)
\label{eqn:2.12}
\end{align}
for any $\theta\in\mathbb{R}$, where $\sgn z=z/|z|$, $z\in\mathbb{C}\setminus\{0\}$, and $\sgn0=1$.
\end{theorem}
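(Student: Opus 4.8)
The plan is to normalize and then reduce every assertion to polarization identities for two unit vectors. Since $A\varphi\neq0$ and $B\varphi\neq0$, put $a=A\varphi/\|A\varphi\|$ and $b=B\varphi/\|B\varphi\|$, so that $(A\varphi|B\varphi)=\|A\varphi\|\,\|B\varphi\|\,(a|b)$, and recall from \eqref{eqn:2.5} and the identities listed after it that
\[
([A,B]\varphi|\varphi)=-2i\,\Im(A\varphi|B\varphi),\qquad
(\{A,B\}\varphi|\varphi)=2\,\Re(A\varphi|B\varphi),
\]
whence $|([A,B]\varphi|\varphi)|=2\,|\Im(A\varphi|B\varphi)|$ and $|(\{A,B\}\varphi|\varphi)|=2\,|\Re(A\varphi|B\varphi)|$. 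All three families of identities then follow from the identities attached to the Cauchy--Schwarz inequality in the Appendix, applied to $a$ and $b$ (and to $a$, $e^{i\theta}b$); below I indicate the direct computation.

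For the first two families, I would expand the squared norms on their right-hand sides by sesquilinearity, using $(a|b)-(b|a)=2i\,\Im(a|b)$ and $(a|b)+(b|a)=2\,\Re(a|b)$, to get
\[
\|a\mp ib\|^2=2\mp2\,\Im(a|b),\qquad
\|a\mp b\|^2=2\mp2\,\Re(a|b),
\]
so that $2-\|a\mp ib\|^2=\pm2\,\Im(a|b)$ and $2-\|a\mp b\|^2=\pm2\,\Re(a|b)$. Multiplying through by $\|A\varphi\|\,\|B\varphi\|$ and substituting the two displayed formulas for $([A,B]\varphi|\varphi)$ and $(\{A,B\}\varphi|\varphi)$ yields the first two families of identities exactly as stated.

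For the third family, the first equality is merely $|z|=\bigl((\Re z)^2+(\Im z)^2\bigr)^{1/2}$ with $z=(A\varphi|B\varphi)$, together with the two moduli above. For the bracketed expression I would run the same norm expansions with $b$ replaced by $e^{i\theta}b$, which gives $1-\tfrac12\|a+e^{i\theta}b\|^2=-\Re\bigl(e^{-i\theta}(a|b)\bigr)$ and $1-\tfrac12\|a\pm ie^{i\theta}b\|^2=\mp\,\Im\bigl(e^{-i\theta}(a|b)\bigr)$; the sum of the squares of these two quantities is $|e^{-i\theta}(a|b)|^2=|(a|b)|^2$, independently of $\theta$ and of the sign, and the prefactor $\|A\varphi\|\,\|B\varphi\|$ restores $|(A\varphi|B\varphi)|$. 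For the last line of the theorem I would specialize $\theta$ so that $e^{i\theta}=-\sgn(A\varphi|B\varphi)=-\sgn(a|b)$ (a legitimate unit scalar, equal to $-1$ in the degenerate case, where $\sgn0=1$): then $e^{-i\theta}(a|b)=-|(a|b)|$ is real, the second squared term drops out, and the first becomes $1-\tfrac12\|a-\sgn(a|b)\,b\|^2=|(a|b)|$, which is the claimed formula after multiplication by $\|A\varphi\|\,\|B\varphi\|$.

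Everything here is a routine polarization expansion, so I do not expect a real analytic obstacle; the points that need the most care are the consistent bookkeeping of the $\pm$ signs and of the phase $e^{i\theta}$ against the antilinearity of $(\,\cdot\,|\,\cdot\,)$ in its second argument, and the degenerate case $(A\varphi|B\varphi)=0$ in the last line of the theorem, which is covered by the convention $\sgn0=1$.
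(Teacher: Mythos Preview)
Your proposal is correct and follows exactly the route the paper takes: the paper's proof of this theorem is the single sentence that the identities are ``direct consequences of the theorems in the Appendix,'' i.e., of the polarization identities (A.1)--(A.4) and Corollary~A.2 applied with $u=A\varphi$, $v=B\varphi$, combined with \eqref{eqn:2.5} and the companion formula for $(\{A,B\}\varphi|\varphi)$. Your write-up simply makes these substitutions explicit and carries out the norm expansions by hand, which is the same argument with the details filled in; the sign bookkeeping and the handling of the degenerate case $(A\varphi|B\varphi)=0$ via $\sgn 0=1$ are done correctly.
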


\begin{remark}
The standard uncertainty inequalities follow directly from equality (\ref{eqn:2.12}).
Indeed, equality (\ref{eqn:2.12}) implies the inequality
\begin{equation}
\|A\varphi\|\|B\varphi\|
\ge|(A\varphi|B\varphi)|
=\frac{1}{2}\,\Bigl(
|([A,B]\varphi|\varphi)|^2
+|(\{A,B\}\varphi|\varphi)|^2
\Bigr)^{1/2},
\label{eqn:SchroedingerIneq}
\end{equation}
which is called Schr\"odinger-Robertson uncertainty inequality \cite{ref:FuruichiYanagi2012,ref:HolevoText2012}.
It is further bounded from below by
\begin{equation}
\|A\varphi\|\|B\varphi\|
\ge\frac{1}{2}|([A,B]\varphi|\varphi)|,
\label{eqn:RobertsonIneq}
\end{equation}
which is known as the Robertson uncertainty inequality \cite{ref:QuntumInfoText-HayashiKimura}.\footnote{There are interesting developments on the Heisenberg uncertainty principle, beyond the Schr\"odinger-Robertson uncertainty relations. See for instance \cite{ref:Ozawa-PRA2003,ref:Ozawa-AnnPhys2004,ref:Branciard-PANS,ref:YuWatanabePhDThesis,ref:BranciardWhite-PRL2014,ref:OzawaEdamatsu-PRL2014,ref:BastosPRA2014,ref:RudolphPRA2014,ref:BuschLahtiWerner-RMP} and references therein.}
\end{remark}

Moreover, characterizations of extremizers are given by:
\begin{theorem}
Let $\varphi\in M$.
Then, the statements in each of the following Parts (1)--(5) are equivalent:
\begin{enumerate}
\item
\begin{enumerate}
\item $(\{A,B\}\varphi|\varphi)=\pm2\|A\varphi\|\|B\varphi\|$.
\medskip
\item $\|B\varphi\|A\varphi=\pm\|A\varphi\|B\varphi$.
\medskip
\item $(A\varphi|B\varphi)=\pm\|A\varphi\|\|B\varphi\|$.
\end{enumerate}

\medskip
\item
\begin{enumerate}
\item $i([A,B]\varphi|\varphi)=\pm2\|A\varphi\|\|B\varphi\|$.
\medskip
\item $\|B\varphi\|A\varphi=\pm i\|A\varphi\|B\varphi$.
\medskip
\item $(A\varphi|B\varphi)=\pm i\|A\varphi\|\|B\varphi\|$.
\end{enumerate}

\medskip
\item
\begin{enumerate}
\item $|(\{A,B\}\varphi|\varphi)|=2\|A\varphi\|\|B\varphi\|$.
\medskip
\item $([A,B]\varphi|\varphi)=0$, $|(A\varphi|B\varphi)|=\|A\varphi\|\|B\varphi\|$.
\medskip
\item $2\|B\varphi\|^2A\varphi=(\{A,B\}\varphi|\varphi)B\varphi$.
\medskip
\item $2\|A\varphi\|^2B\varphi=(\{A,B\}\varphi|\varphi)A\varphi$.
\end{enumerate}

\medskip
\item
\begin{enumerate}
\item $|([A,B]\varphi|\varphi)|=2\|A\varphi\|\|B\varphi\|$.
\medskip
\item $(\{A,B\}\varphi|\varphi)=0$, $|(A\varphi|B\varphi)|=\|A\varphi\|\|B\varphi\|$.
\medskip
\item $2\|B\varphi\|^2A\varphi=-([A,B]\varphi|\varphi)B\varphi$.
\medskip
\item $2\|A\varphi\|^2B\varphi=([A,B]\varphi|\varphi)A\varphi$.
\end{enumerate}

\medskip
\item
\begin{enumerate}
\item $|(A\varphi|B\varphi)|=\|A\varphi\|\|B\varphi\|$.
\medskip
\item $\|B\varphi\|A\varphi=[\sgn(A\varphi|B\varphi)]\|A\varphi\|B\varphi$.
\medskip
\item $\|B\varphi\|^2A\varphi=(A\varphi|B\varphi)B\varphi$.
\medskip
\item $\|A\varphi\|^2B\varphi=\overline{(A\varphi|B\varphi)}A\varphi$.
\end{enumerate}
\end{enumerate}
\end{theorem}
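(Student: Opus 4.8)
The plan is to reduce every assertion to the single pair of vectors $u=A\varphi$ and $v=B\varphi$ of the scalar product space $H$, and then to invoke the analysis of the equality case of the Cauchy--Schwarz inequality recorded in the Appendix. First I would fix the translation dictionary: by the identities in~(\ref{eqn:2.5}) and the display lines immediately following it, one has $(A\varphi|B\varphi)=(u|v)$, $(\{A,B\}\varphi|\varphi)=2\Re(u|v)$, $i([A,B]\varphi|\varphi)=2\Im(u|v)$, and of course $\|A\varphi\|\,\|B\varphi\|=\|u\|\,\|v\|$. Under this dictionary, Part~(1)(a) reads $\Re(u|v)=\pm\|u\|\,\|v\|$, Part~(2)(a) reads $\Im(u|v)=\pm\|u\|\,\|v\|$, Parts~(3)(a) and~(4)(a) read $|\Re(u|v)|=\|u\|\,\|v\|$ and $|\Im(u|v)|=\|u\|\,\|v\|$, and every statement in Part~(5) is already a statement about $u$ and $v$ alone.

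The core is Part~(5): with $u=A\varphi$, $v=B\varphi$ it is precisely the equality case $|(u|v)|=\|u\|\,\|v\|$ of the Cauchy--Schwarz inequality together with its familiar reformulations, which the Appendix supplies. In the degenerate cases $u=0$ or $v=0$ all four of (a)--(d) hold simultaneously and are vacuously equivalent (here the convention $\sgn0=1$ is exactly what makes (b) read correctly); when $u,v\neq0$ all four express the proportionality $u=\lambda v$ with $\lambda=(u|v)/\|v\|^2=[\sgn(u|v)]\,\|u\|/\|v\|$.

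Parts~(1) and~(2) then follow by a squeezing argument. Since $\Re z\le|z|$ and $|(u|v)|\le\|u\|\,\|v\|$, the equality $\Re(u|v)=+\|u\|\,\|v\|$ forces $(u|v)=|(u|v)|=\|u\|\,\|v\|$, hence $\sgn(u|v)=1$ (or the product is $0$), so Part~(5) yields the equivalence of (a),(b),(c); the sign $-$, and the passage from $\Re$ to $\Im$ (i.e.\ from the phases $\pm1$ to $\pm i$), are handled identically using $\sgn(\pm\|u\|\,\|v\|)=\pm1$ and $\sgn(\pm i\|u\|\,\|v\|)=\pm i$. For Parts~(3) and~(4) I would first observe that $|\Re(u|v)|=\|u\|\,\|v\|$ forces $(u|v)=\pm\|u\|\,\|v\|$ and $|\Im(u|v)|=\|u\|\,\|v\|$ forces $(u|v)=\pm i\|u\|\,\|v\|$, which gives (a)$\Leftrightarrow$(b) and reduces each part to the ``$\pm$'' case just treated. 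The proportionality statements (c) and (d) in Parts~(3) and~(4) then match Part~(5)(c),(d) after rewriting the scalar coefficients: when $(u|v)$ is real one has $2\Re(u|v)=2(u|v)=2\overline{(u|v)}$, and when $(u|v)$ is purely imaginary one has $-2i\Im(u|v)=2(u|v)$ and $([A,B]\varphi|\varphi)=-2i\Im(u|v)=2\overline{(u|v)}$. Conversely, if (c) holds with $v\neq0$ then $u$ is a real (resp.\ $i$ times real) multiple of $v$, and comparing $(u|v)$ with $\|u\|\,\|v\|$ recovers (a); the case $v=0$ and the statements (d) are symmetric.

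The only step carrying genuine content rather than bookkeeping is the equality clause of the Cauchy--Schwarz inequality in the four forms (5)(a)--(d), which is exactly what the Appendix is designed to provide; the remaining work is purely the careful propagation of the four phases $1,-1,i,-i$ through Parts~(1)--(4). The main risk of error is sign-tracking in the commutator identities, which is why I would fix the translation dictionary above once and for all at the outset.
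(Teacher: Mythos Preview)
Your proposal is correct and is exactly the paper's approach: the paper states Theorem~2.2 without a dedicated proof, noting only that it (together with Theorem~2.1) is a ``direct consequence of the theorems in the Appendix,'' i.e.\ Theorem~A.2 applied with $u=A\varphi$, $v=B\varphi$ under precisely the dictionary $(\{A,B\}\varphi|\varphi)=2\Re(u|v)$, $([A,B]\varphi|\varphi)=-2i\Im(u|v)$ that you set up. Your additional sketch of the internal logic of Parts~(1)--(5) simply recapitulates the proof of Theorem~A.2 itself, so nothing is missing and nothing is added beyond what the paper intends.
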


\section{Applications}
\label{sec:Applications}
In this section, we give a number of examples of commutation relations between operators in the Hilbert space $L^2(\mathbb{R}^n)$ of square integrable functions on $\mathbb{R}^n$ as well as related norm identities which are regarded as equality versions of well-known inequalities.
We follow the standard notation to denote a point in $\mathbb{R}^n$ by $x=(x_1,\ldots,x_n)\in\mathbb{R}^n$.
The associated Euclidean length is defined as $|x|=(x_1^2+\cdots+x_n^2)^{1/2}$.
The gradient operator is defined as $\nabla=(\partial_1,\ldots,\partial_n)$, where $\partial_j=\partial/\partial x_j$ is the partial differential operator in the $j$th direction.

\subsection{Momentum and Position Operators}
Let $A=-i\nabla$ and $B=x$.
More precisely, 
\begin{align}
A\varphi&=(-i\partial_1\varphi,\ldots,-i\partial_n\varphi),
\\
B\varphi&=(x_1\varphi,\ldots,x_n\varphi)
\end{align}
for any $\varphi\in C_0^\infty(\mathbb{R}^n;\mathbb{C})$, compactly supported smooth functions on $\mathbb{R}^n$.
In fact, the natural domains of $A$ and $B$ are given respectively by
\begin{align}
D(A)
&=H^1(\mathbb{R}^n)
=\{
\varphi\in L^2(\mathbb{R}^n);\,\partial_j\varphi\in L^2(\mathbb{R}^n)\ \text{for all}\ j\ \text{with}\ 1\le j\le n
\},
\\
D(B)
&
=\{
\varphi\in L^2(\mathbb{R}^n);\,x_j\varphi\in L^2(\mathbb{R}^n)\ \text{for all}\ j\ \text{with}\ 1\le j\le n
\},
\end{align}
where the derivatives are understood to be distributional derivatives and $H^1$ denotes the standard Sobolev space of order one.
Since $A\varphi$ and $B\varphi$ are $\mathbb{C}^n$-valued, the corresponding natural Hilbert space is given by $H=L^2(\mathbb{R}^n;\mathbb{C}^n)$ with scalar product
\begin{equation}
((\varphi_1,\ldots,\varphi_n)|(\psi_1,\ldots,\psi_n))
=\sum_{j=1}^n
(\varphi_j|\psi_j)
=\sum_{j=1}^n\int_{\mathbb{R}^n}\varphi_j\overline{\psi_j}\,dx,
\end{equation}
where $\varphi_j,\psi_j\in L^2(\mathbb{R}^n;\mathbb{C})$, $1\le j\le n$.
Since $C_0^\infty(\mathbb{R}^n)$ is dense in $L^2(\mathbb{R}^n)$, all computations will be carried out on $C_0^\infty$ and then on $M\equiv D(A)\cap D(B)$ by density.

\begin{theorem}\label{thm:3.1}
\begin{enumerate}
\item Let $\varphi\in M$ satisfy $x\varphi\neq0$, $\nabla\varphi\neq0$.
Then, we have
\begin{align}
n\|\varphi\|^2
&=-2\Re(x\varphi|\nabla\varphi)
\label{eqn:3.1}
\\
&=\|x\varphi\|\|\nabla\varphi\|
\left(
2-\left\|
\frac{x\varphi}{\|x\varphi\|}
+
\frac{\nabla\varphi}{\|\nabla\varphi\|}
\right\|^2
\right)
\label{eqn:3.2}
\\
&=\|x\varphi\|^2
+\|\nabla\varphi\|^2
-\|x\varphi+\nabla\varphi\|^2,
\label{eqn:3.3}
\end{align}
and 
\begin{equation}
|(x\varphi|\nabla\varphi)|
=\|x\varphi\|\|\nabla\varphi\|
\left(
1-\frac{1}{2}\left\|
\frac{x\varphi}{\|x\varphi\|}
-[\sgn(x\varphi|\nabla\varphi)]\frac{\nabla\varphi}{\|\nabla\varphi\|}
\right\|^2
\right).
\label{eqn:SchroedingerEq}
\end{equation}

\item Let $\varphi\in M$.
Then, the following statements are equivalent:
\medskip
\begin{enumerate}
\item $n\|\varphi\|^2=\|x\varphi\|^2+\|\nabla\varphi\|^2$.

\medskip
\item $x\varphi=-\nabla\varphi$.

\medskip
\item There exists $\theta\in\mathbb{R}$ such that, for any $x\in\mathbb{R}^n$, $\varphi$ satisfies
\begin{equation}
\varphi(x)=e^{i\theta}\frac{1}{\pi^{n/4}}\|\varphi\|\exp\!\left(-\frac{|x|^2}{2}\right).
\label{eqn:Coherent}
\end{equation}
\end{enumerate}

\item Let $\varphi\in M$ satisfy $x\varphi\neq0$.
Then, the following statements are equivalent:
\medskip
\begin{enumerate}
\item $n\|\varphi\|^2=2\|x\varphi\|\|\nabla\varphi\|$.

\medskip
\item $\|\nabla\varphi\|x\varphi=-\|x\varphi\|\nabla\varphi$.

\medskip
\item There exists $\theta\in\mathbb{R}$ such that, for any $x\in\mathbb{R}^n$, $\varphi$ satisfies
\begin{equation}
\varphi(x)=e^{i\theta}
\left(
\frac{\|\nabla\varphi\|}{\pi\|x\varphi\|}
\right)^{n/4}
\|\varphi\|\exp\!\left(-\frac{\|\nabla\varphi\|}{\|x\varphi\|}\frac{|x|^2}{2}\right).
\label{eqn:Squeezed}
\end{equation}
\end{enumerate}

\item Let $\varphi\in M$ satisfy $x\varphi\neq0$.
Then, the following statements are equivalent:
\medskip
\begin{enumerate}
\item $|(x\varphi|\nabla\varphi)|=\|x\varphi\|\|\nabla\varphi\|$.

\medskip
\item $\|\nabla\varphi\|x\varphi=[\sgn(x\varphi|\nabla\varphi)]\|x\varphi\|\nabla\varphi$.

\medskip
\item There exists $\theta\in\mathbb{R}$ such that, for any $x\in\mathbb{R}^n$, $\varphi$ satisfies
\begin{equation}
\varphi(x)=e^{i\theta}
\left(
-[\Re\sgn(x\varphi|\nabla\varphi)]
\frac{\|\nabla\varphi\|}{\pi\|x\varphi\|}
\right)^{n/4}
\|\varphi\|\exp\!\left(
[\sgn\overline{(x\varphi|\nabla\varphi)}]
\frac{\|\nabla\varphi\|}{\|x\varphi\|}\frac{|x|^2}{2}
\right).
\label{eqn:SqueezedGen}
\end{equation}
Note that for $\nabla\varphi\neq0$
\begin{equation}
\Re(x\varphi|\nabla\varphi)
=\Re\overline{(x\varphi|\nabla\varphi)}
=-\frac{n}{2}\|\varphi\|^2<0
\end{equation}
and hence $\Re\sgn(x\varphi|\nabla\varphi)=\Re\sgn\overline{(x\varphi|\nabla\varphi)}<0$.
\end{enumerate}
\end{enumerate}
\end{theorem}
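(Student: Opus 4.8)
The plan is to reduce the whole theorem to one integration-by-parts identity on $C_0^\infty(\mathbb{R}^n)$, extended to $M$ by density, combined with the algebraic identities of Section~\ref{sec:UncertaintyRelations} specialized to $A=-i\nabla$ and $B=x$, and the elementary integration of the first-order linear PDE $\nabla\varphi=\mu\,x\varphi$. Throughout, the case $\varphi=0$ is trivial and discarded, and $x\varphi\neq0$ forces $\varphi\neq0$, hence $\nabla\varphi\neq0$, since a function in $L^2(\mathbb{R}^n)$ with vanishing distributional gradient must be $0$. The key computation is that, for $\varphi\in C_0^\infty(\mathbb{R}^n;\mathbb{C})$,
\[
([A,B]\varphi|\varphi)=(B\varphi|A\varphi)-(A\varphi|B\varphi)=i\sum_{j=1}^n\int_{\mathbb{R}^n}x_j\,\partial_j|\varphi|^2\,dx=-in\|\varphi\|^2,
\]
the last step being integration by parts ($\partial_jx_j=1$, no boundary terms); equivalently $\Re(x\varphi|\nabla\varphi)=\tfrac12\sum_j\int x_j\,\partial_j|\varphi|^2\,dx=-\tfrac n2\|\varphi\|^2$. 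As $C_0^\infty$ is dense in $M$ and both sides are continuous in the graph norm, the identity $n\|\varphi\|^2=-2\Re(x\varphi|\nabla\varphi)$, i.e.\ $i([A,B]\varphi|\varphi)=n\|\varphi\|^2$, holds for every $\varphi\in M$; this is (\ref{eqn:3.1}).

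For Part~(1) I would substitute $A=-i\nabla$, $B=x$ into the first theorem of Section~\ref{sec:UncertaintyRelations}, using $\|A\varphi\|=\|\nabla\varphi\|$, $\|B\varphi\|=\|x\varphi\|$ and $A\varphi/\|A\varphi\|=-i\,\nabla\varphi/\|\nabla\varphi\|$, so that the unimodular factor $-i$ drops out of the norms on the right-hand sides. The upper-sign version of (\ref{eqn:2.10}), whose left-hand side is $+i([A,B]\varphi|\varphi)=n\|\varphi\|^2$, then becomes precisely (\ref{eqn:3.2}); expanding $\|x\varphi+\nabla\varphi\|^2$ and inserting $\Re(x\varphi|\nabla\varphi)=-\tfrac n2\|\varphi\|^2$ turns (\ref{eqn:3.2}) into (\ref{eqn:3.3}). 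For the Schr\"odinger equality I would use (\ref{eqn:2.12}) with $(A\varphi|B\varphi)=-i\,\overline{(x\varphi|\nabla\varphi)}$, so that $|(A\varphi|B\varphi)|=|(x\varphi|\nabla\varphi)|$ and $\sgn(A\varphi|B\varphi)=-i\,\overline{\sgn(x\varphi|\nabla\varphi)}$, and then multiply the vector inside the last norm in (\ref{eqn:2.12}) by the unimodular scalar $\sgn(x\varphi|\nabla\varphi)$ to recover (\ref{eqn:SchroedingerEq}). This step is purely sign-and-phase bookkeeping; the only subtlety is to keep the $\pm$/$\mp$ choices consistent.

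For Parts~(2)--(4) I would obtain the equivalence (a)$\Leftrightarrow$(b) in each case by setting a squared norm to zero in an identity of Part~(1): (\ref{eqn:3.3}) gives (2a)$\Leftrightarrow\|x\varphi+\nabla\varphi\|=0\Leftrightarrow$(2b); (\ref{eqn:3.2}) gives (3a)$\Leftrightarrow\bigl\|\tfrac{x\varphi}{\|x\varphi\|}+\tfrac{\nabla\varphi}{\|\nabla\varphi\|}\bigr\|=0\Leftrightarrow$(3b); and (\ref{eqn:SchroedingerEq}) gives (4a)$\Leftrightarrow\bigl\|\tfrac{x\varphi}{\|x\varphi\|}-[\sgn(x\varphi|\nabla\varphi)]\tfrac{\nabla\varphi}{\|\nabla\varphi\|}\bigr\|=0\Leftrightarrow$(4b), the remark displayed in~(4c) being a restatement of (\ref{eqn:3.1}) (which forces $\Re\sgn(x\varphi|\nabla\varphi)<0$). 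For (b)$\Leftrightarrow$(c), note that in each case (b) says $\nabla\varphi=\mu\,x\varphi$ with $\mu$ read off from the norms of $\varphi$: $\mu=-1$ in~(2), $\mu=-\|\nabla\varphi\|/\|x\varphi\|<0$ in~(3), and $\mu=[\sgn\overline{(x\varphi|\nabla\varphi)}]\,\|\nabla\varphi\|/\|x\varphi\|$ in~(4), the last with $\Re\mu<0$ by the preceding remark. Then $\nabla\bigl(e^{-\mu|x|^2/2}\varphi\bigr)=e^{-\mu|x|^2/2}(\nabla\varphi-\mu\,x\varphi)=0$ in $\mathcal{D}'(\mathbb{R}^n)$ ($\varphi$ locally $H^1$ times a smooth weight), and a distribution on the connected space $\mathbb{R}^n$ with zero gradient is constant, so $\varphi(x)=C\,e^{\mu|x|^2/2}$; the condition $\Re\mu<0$ makes this lie in $M$. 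Computing $\|\varphi\|^2=|C|^2\int e^{\Re\mu\,|x|^2}\,dx=|C|^2\bigl(\pi/(-\Re\mu)\bigr)^{n/2}$ fixes $|C|$, and with $C=e^{i\theta}|C|$ one obtains (\ref{eqn:Coherent}), (\ref{eqn:Squeezed}), (\ref{eqn:SqueezedGen}) respectively after re-expressing $-\Re\mu$ and $\mu$ through $\|\varphi\|$, $\|\nabla\varphi\|$, $\|x\varphi\|$ and $(x\varphi|\nabla\varphi)$. The converse is a one-line check: differentiating any $\varphi$ of the form displayed in~(c) gives $\nabla\varphi=\mu\,x\varphi$ with $|\mu|=\|\nabla\varphi\|/\|x\varphi\|$ and $\sgn(x\varphi|\nabla\varphi)=\sgn\overline{\mu}$, from which~(b) follows.

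The main obstacle is Part~(4), where $\mu$ is genuinely complex: one must confirm that the scalar forced by~(4b) has strictly negative real part --- so that $e^{\mu|x|^2/2}\in L^2(\mathbb{R}^n)$ --- which is precisely where the integration-by-parts identity (\ref{eqn:3.1}) re-enters, and one must check that the self-referential formula (\ref{eqn:SqueezedGen}), whose coefficients are assembled from $\|\varphi\|$, $\|\nabla\varphi\|$, $\|x\varphi\|$ and $(x\varphi|\nabla\varphi)$, is internally consistent in both directions. The distributional solution of $\nabla\varphi=\mu\,x\varphi$ and the density extension of the basic identity are standard but should be stated explicitly; everything else is elementary algebra and Gaussian integration.
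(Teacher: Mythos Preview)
Your proposal is correct and follows essentially the same route as the paper: the integration-by-parts identity $2\Re(x\varphi|\nabla\varphi)=-n\|\varphi\|^2$ on $C_0^\infty$ extended by density, the specialization of the abstract equalities (2.10)$_+$ and (\ref{eqn:2.12}) to $A=-i\nabla$, $B=x$ for Part~(1), and for Parts~(2)--(4) the reading off of (a)$\Leftrightarrow$(b) from the vanishing of a squared norm in Part~(1) followed by solving $\nabla\varphi=\mu x\varphi$ via the integrating factor $e^{-\mu|x|^2/2}$ and normalizing. Your explicit phase bookkeeping for (\ref{eqn:SchroedingerEq}), the remark that $x\varphi\neq0$ already forces $\nabla\varphi\neq0$, and the observation that $\Re\mu<0$ in Part~(4) is exactly where (\ref{eqn:3.1}) re-enters, are all in order and slightly more detailed than the paper's own presentation.
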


\begin{remark}
In general, one of the statements in Part (2) implies any of the statements in Part (3).
The converse implication holds if and only if $\|x\varphi\|=\|\nabla\varphi\|$.
Moreover, one of the statements in Part (3) implies any of the statements in Part (4).
The converse implication holds if and only if $(x\varphi|\nabla\varphi)=-|(x\varphi|\nabla\varphi)|$.
\end{remark}

\begin{remark}
For $n=1$, the inequality 
\begin{equation}
\|x\varphi\|\|\nabla\varphi\|\ge\frac{1}{2}\|\varphi\|^2
\label{eqn:KennardIneq}
\end{equation}
implied by equality (\ref{eqn:3.2}) is known as the Kennard uncertainty inequality, which is a version of the Robertson uncertainty inequality (\ref{eqn:RobertsonIneq}) specialized to $A=-i\nabla$ and $B=x$.
The vector $\varphi$ saturating the Kennard uncertainty inequality (\ref{eqn:KennardIneq}) is given by (\ref{eqn:Squeezed}), and is known to be a squeezed state in the field of quantum optics \cite{ref:QuantumOptics-Scully,ref:QuantumOptics-WallsMilburn}.
If furthermore the vector $\varphi$ satisfies $\|x\varphi\|=\|\nabla\varphi\|=\frac{1}{\sqrt{2}}\|\varphi\|$, it is reduced to (\ref{eqn:Coherent}), and is called coherent state \cite{ref:QuantumOptics-Scully,ref:QuantumOptics-WallsMilburn}.
A tighter inequality than (\ref{eqn:KennardIneq}),
\begin{equation}
\|x\varphi\|\|\nabla\varphi\|
\ge|(x\varphi|\nabla\varphi)|
=\frac{1}{2}\,\Bigl(
\|\varphi\|^2
+|((x\cdot\nabla+\nabla\cdot x)\varphi|\varphi)|^2
\Bigr)^{1/2},
\label{eqn:SchroedingerIneqXP}
\end{equation}
is available from equality (\ref{eqn:SchroedingerEq}) for $n=1$, as a special version of the Schr\"odinger-Robertson uncertainty inequality (\ref{eqn:SchroedingerIneq}).
Inequality (\ref{eqn:SchroedingerIneqXP}) is saturated by the vector $\varphi$ given in (\ref{eqn:SqueezedGen}), which is again a squeezed state.
The family of the extremizers (\ref{eqn:SqueezedGen}) of the Schr\"odinger-Robertson uncertainty inequality (\ref{eqn:SchroedingerIneqXP}) includes the squeezed state (\ref{eqn:Squeezed}) and the coherent state (\ref{eqn:Coherent}) as special cases.
\end{remark}

\begin{proof}[Proof of Theorem \ref{thm:3.1}]
\begin{enumerate}
\item Let $\varphi\in C_0^\infty$.
Then, 
\begin{equation}
2\Re(x\varphi|\nabla\varphi)
=\int x\cdot\nabla|\varphi|^2\,dx
=-\int(\div x)|\varphi|^2\,dx
=-n\|\varphi\|^2.
\end{equation}
Moreover, we have
\begin{equation}
\|x\varphi+\nabla\varphi\|^2
=\|x\varphi\|^2
+\|\nabla\varphi\|^2
+2\Re(x\varphi|\nabla\varphi).
\end{equation}
Then, equalities (\ref{eqn:3.1})--(\ref{eqn:3.3}) follow, by recalling (\ref{eqn:2.5}) and (2.10)$_+$.
On the other hand, the identity (\ref{eqn:SchroedingerEq}) follows from (\ref{eqn:2.12}), by noting $|{\sgn z}|=1$ and $(\sgn z)^{-1}=\overline{\sgn z}=\sgn\overline{z}$.

\medskip
\item The equivalence between (i) and (ii) follows from (\ref{eqn:3.3}).
If $\varphi$ has the form (iii), then (ii) follows by a direct calculation.
Conversely, if (ii) holds, then
\begin{equation}
\nabla\!\left[
\exp\!\left(
\frac{|x|^2}{2}
\right)
\varphi
\right]
=\exp\!\left(
\frac{|x|^2}{2}
\right)
(x\varphi+\nabla\varphi)=0,
\end{equation}
and therefore, for some $c\in\mathbb{C}$, $\varphi$ is represented as
\begin{equation}
\varphi(x)=c\exp\!\left(
-\frac{|x|^2}{2}
\right).
\end{equation}
Then, (iii) follows by evaluating $\|\varphi\|$.

\medskip
\item The equivalence between (i) and (ii) follows from (\ref{eqn:3.2}).
If $\varphi$ has the form (iii), then (ii) follows by a direct calculation.
Conversely, if (ii) holds, then
\begin{equation}
\nabla\!\left[
\exp\!\left(
\frac{\|\nabla\varphi\|}{\|x\varphi\|}
\frac{|x|^2}{2}
\right)
\varphi
\right]
=
\exp\!\left(
\frac{\|\nabla\varphi\|}{\|x\varphi\|}
\frac{|x|^2}{2}
\right)
\left(
\frac{\|\nabla\varphi\|}{\|x\varphi\|}x\varphi+\nabla\varphi
\right)=0,
\end{equation}
and therefore, for some $c\in\mathbb{C}$, $\varphi$ is represented as
\begin{equation}
\varphi(x)=c\exp\!\left(
-\frac{\|\nabla\varphi\|}{\|x\varphi\|}
\frac{|x|^2}{2}
\right).
\end{equation}
Then, (iii) follows by evaluating $\|\varphi\|$.

\medskip
\item The equivalence between (i) and (ii) follows from (\ref{eqn:SchroedingerEq}).
If $\varphi$ has the form (iii), then (ii) follows by a direct calculation.
Conversely, if (ii) holds, then
\begin{align}
&\nabla\!\left[
\exp\!\left(
-[\sgn\overline{(x\varphi|\nabla\varphi)}]
\frac{\|\nabla\varphi\|}{\|x\varphi\|}\frac{|x|^2}{2}
\right)
\varphi
\right]
\nonumber\\
&\qquad
=
\exp\!\left(
-[\sgn\overline{(x\varphi|\nabla\varphi)}]
\frac{\|\nabla\varphi\|}{\|x\varphi\|}\frac{|x|^2}{2}
\right)
\left(
-[\sgn\overline{(x\varphi|\nabla\varphi)}]
\frac{\|\nabla\varphi\|}{\|x\varphi\|}x\varphi+\nabla\varphi
\right)
=0
\end{align}
by noting $\sgn\overline{z}=(\sgn z)^{-1}$, and therefore, for some $c\in\mathbb{C}$, $\varphi$ is represented as
\begin{equation}
\varphi(x)=c\exp\!\left(
[\sgn\overline{(x\varphi|\nabla\varphi)}]
\frac{\|\nabla\varphi\|}{\|x\varphi\|}
\frac{|x|^2}{2}
\right).
\end{equation}
Then, (iii) follows by evaluating $\|\varphi\|$.
\end{enumerate}
\end{proof}

We now rewrite (\ref{eqn:3.1}) as
\begin{equation}
\Re\Bigl(x\cdot\nabla\varphi+\frac{n}{2}\varphi\Big|\varphi\Bigr)=0
\label{eqn:3.4}
\end{equation}
and regard (\ref{eqn:3.4}) as an orthogonality relation.
Then, as in \cite{ref:MachiharaOzawaWadadeSubmitted} we notice that (\ref{eqn:3.4}) yields a new equality:
\begin{theorem}\label{thm:3.2}
The following equality
\begin{equation}
\|x\cdot\nabla\varphi\|^2
=\Bigl\|x\cdot\nabla\varphi+\frac{n}{2}\varphi\Bigr\|^2
+\left(\frac{n}{2}\right)^2\|\varphi\|^2
\label{eqn:3.5}
\end{equation}
holds for all $\varphi\in D(A)$ with $x\cdot\nabla\varphi\in L^2(\mathbb{R}^n)$.
There does not exist $\varphi\in D(A)\setminus\{0\}$ with $x\cdot\nabla\varphi\in L^2(\mathbb{R}^n)$ satisfying
\begin{equation}
\|x\cdot\nabla\varphi\|^2
=\left(\frac{n}{2}\right)^2\|\varphi\|^2.
\label{eqn:3.6}
\end{equation}
\end{theorem}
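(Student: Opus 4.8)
The plan is to first re-derive the ``orthogonality relation'' (\ref{eqn:3.4}) on the domain that actually appears in Theorem~\ref{thm:3.2}, namely $\{\varphi\in D(A):x\cdot\nabla\varphi\in L^2(\mathbb{R}^n)\}$ — this is a different domain from the $M=D(A)\cap D(B)$ of Theorem~\ref{thm:3.1}, so (\ref{eqn:3.1}) cannot simply be quoted — and then to read off (\ref{eqn:3.5}) as a Pythagorean identity. For such $\varphi$ one has $\varphi\in H^1(\mathbb{R}^n)$, hence $g:=|\varphi|^2\in W^{1,1}_{\mathrm{loc}}(\mathbb{R}^n)\cap L^1(\mathbb{R}^n)$ with $\nabla g=2\Re(\bar\varphi\,\nabla\varphi)$, so that $x\cdot\nabla g=2\Re(\bar\varphi\,x\cdot\nabla\varphi)\in L^1(\mathbb{R}^n)$ by the Cauchy–Schwarz inequality. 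Integrating by parts against a cutoff $\chi_R(x)=\chi(x/R)$, $\chi\in C_0^\infty$ with $\chi\equiv1$ near the origin, gives $\int\chi_R\,x\cdot\nabla g\,dx=-\int g\,\div(x\chi_R)\,dx=-n\int\chi_R g\,dx-\int g\,x\cdot\nabla\chi_R\,dx$; letting $R\to\infty$, the left side tends to $\int x\cdot\nabla g\,dx$, the first term on the right to $-n\|\varphi\|^2$, and the last term to $0$ because $x\cdot\nabla\chi_R=(x/R)\cdot(\nabla\chi)(x/R)$ is uniformly bounded, vanishes off $\{|x|\sim R\}$, and $g\in L^1$. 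This yields $2\Re(x\cdot\nabla\varphi|\varphi)=\int x\cdot\nabla g\,dx=-n\|\varphi\|^2$, which is exactly (\ref{eqn:3.4}).

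With (\ref{eqn:3.4}) in hand on the right domain, (\ref{eqn:3.5}) is immediate:
\[
\left\|x\cdot\nabla\varphi+\tfrac{n}{2}\varphi\right\|^2=\|x\cdot\nabla\varphi\|^2+n\,\Re(x\cdot\nabla\varphi|\varphi)+\left(\tfrac{n}{2}\right)^2\|\varphi\|^2=\|x\cdot\nabla\varphi\|^2-\left(\tfrac{n}{2}\right)^2\|\varphi\|^2 ;
\]
equivalently, (\ref{eqn:3.4}) says that $x\cdot\nabla\varphi+\tfrac n2\varphi$ is orthogonal to $\varphi$ in the real-part sense, and (\ref{eqn:3.5}) is the associated Pythagorean identity $\|(x\cdot\nabla\varphi+\tfrac n2\varphi)-\tfrac n2\varphi\|^2=\|x\cdot\nabla\varphi+\tfrac n2\varphi\|^2+\|\tfrac n2\varphi\|^2$. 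In particular (\ref{eqn:3.5}) contains the dilation/Hardy-type lower bound $\|x\cdot\nabla\varphi\|\ge\tfrac n2\|\varphi\|$, and the second assertion of the theorem is precisely the statement that this inequality is never saturated on $D(A)\setminus\{0\}$.

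For that non-existence statement, suppose $\varphi\in D(A)\setminus\{0\}$, $x\cdot\nabla\varphi\in L^2$, and (\ref{eqn:3.6}) holds. By (\ref{eqn:3.5}) this forces $\|x\cdot\nabla\varphi+\tfrac n2\varphi\|=0$, i.e.
\[
x\cdot\nabla\varphi=-\tfrac{n}{2}\varphi\quad\text{a.e. on }\mathbb{R}^n .
\]
Multiplying by $\bar\varphi$, adding the conjugate relation and using $\nabla g=2\Re(\bar\varphi\nabla\varphi)$ again, we get $x\cdot\nabla g=-n g$ for $g=|\varphi|^2$; that is, $g\ge0$, $g\in L^1$, and $g$ is distributionally homogeneous of degree $-n$. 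The point is that such a $g$ must vanish. Testing $x\cdot\nabla g=-ng$ against $\zeta\in C_0^\infty$ and integrating by parts, $-n\int g\zeta\,dx=\int(x\cdot\nabla g)\zeta\,dx=-\int g\,\div(x\zeta)\,dx=-n\int g\zeta\,dx-\int g\,(x\cdot\nabla\zeta)\,dx$, hence $\int g\,(x\cdot\nabla\zeta)\,dx=0$; by the same cutoff argument as above this extends to every bounded Lipschitz $\zeta$ whose radial derivative $x\cdot\nabla\zeta$ is bounded with compact support. Choosing a radial such $\zeta$ with $x\cdot\nabla\zeta=\psi(|x|)$, $\psi\ge0$ a bump that is positive on an annulus $\{a<|x|<b\}$, we obtain $\int_{\{a<|x|<b\}}g\,dx=0$; as $0<a<b$ are arbitrary, $g\equiv0$ a.e. on $\mathbb{R}^n\setminus\{0\}$, hence $\varphi=0$, a contradiction.

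The routine parts are the product rule for $|\varphi|^2$ when $\varphi\in H^1$ and the dominated-convergence estimates on the cutoff errors. The two steps needing genuine care are: (i) validating $2\Re(x\cdot\nabla\varphi|\varphi)=-n\|\varphi\|^2$ on the enlarged domain $\{\varphi\in D(A):x\cdot\nabla\varphi\in L^2\}$ — the crux being that $x\cdot\nabla|\varphi|^2$, a priori only a distribution, is the honest $L^1$ function $2\Re(\bar\varphi\,x\cdot\nabla\varphi)$, which is what lets the cutoff integration by parts close with no boundary term at infinity; and (ii) upgrading ``$g$ distributionally homogeneous of degree $-n$ and integrable'' to ``$g\equiv0$'' for a function that need not be continuous, which is handled by the annulus-testing argument above (equivalently, by observing that $x\cdot\nabla\varphi=-\tfrac n2\varphi$ exhibits $\varphi$ as a null vector of $\tfrac1i(x\cdot\nabla+\tfrac n2)$, the essentially self-adjoint generator of the unitary dilation group, whose spectrum contains no eigenvalues).
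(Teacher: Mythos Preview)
Your argument is correct. For the identity (\ref{eqn:3.5}) you follow exactly the paper's route---expand $\|x\cdot\nabla\varphi\|^2=\|(x\cdot\nabla\varphi+\tfrac n2\varphi)-\tfrac n2\varphi\|^2$ and use the orthogonality (\ref{eqn:3.4})---though you are more explicit than the paper about justifying (\ref{eqn:3.4}) on the domain $\{\varphi\in H^1:x\cdot\nabla\varphi\in L^2\}$ rather than on $M=D(A)\cap D(B)$; the paper handles this implicitly by density from $C_0^\infty$.

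For the non-existence of extremizers your route genuinely differs. The paper observes the integrating-factor identity
\[
x\cdot\nabla\varphi+\tfrac n2\varphi=|x|^{-n/2}\,x\cdot\nabla\bigl(|x|^{n/2}\varphi\bigr),
\]
so a solution of $x\cdot\nabla\varphi+\tfrac n2\varphi=0$ must have the form $\varphi(x)=|x|^{-n/2}f(x/|x|)$ for some $f:S^{n-1}\to\mathbb{C}$, and then $\varphi\in L^2(\mathbb{R}^n)$ forces $\int_{S^{n-1}}|f|^2\,d\sigma=0$, hence $\varphi=0$. You instead pass to $g=|\varphi|^2$, derive the homogeneity relation $x\cdot\nabla g=-ng$, and test against radial $\zeta$ with $x\cdot\nabla\zeta$ a nonnegative bump on an annulus to conclude $g\equiv0$ there. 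The paper's approach is shorter and has the advantage of explicitly exhibiting the would-be extremizers; your testing argument avoids solving the equation and works directly with the $L^1$ obstruction, which is closer in spirit to your parenthetical remark that $\tfrac1i(x\cdot\nabla+\tfrac n2)$, as the self-adjoint generator of the unitary dilation group, has no point spectrum.
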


\begin{remark}\label{rmk:3.2}
The inequality
\begin{equation}
\frac{n}{2}\|\varphi\|\le\|x\cdot\nabla\varphi\|
\label{eqn:3.7}
\end{equation}
as well as the nonexistence of nontrivial extremizers has been proved in \cite{ref:OzawaSasaki}.
Theorem \ref{thm:3.2} is recognized as an optimal description of (\ref{eqn:3.7}) from the point of view of equalities.
\end{remark}

\begin{proof}[Proof of Theorem \ref{thm:3.2}]
The equality (\ref{eqn:3.5}) follows from (\ref{eqn:3.4}), by noticing that
\begin{align}
\|x\cdot\nabla\varphi\|^2
&=\Bigl\|
\Bigl(
x\cdot\nabla\varphi+\frac{n}{2}\varphi
\Bigr)
-\frac{n}{2}\varphi
\Bigr\|^2
\nonumber\\
&=\Bigl\|
x\cdot\nabla\varphi+\frac{n}{2}\varphi
\Bigr\|^2
+\left(\frac{n}{2}\right)^2
\|\varphi\|^2.
\end{align}
Moreover, since
\begin{equation}
x\cdot\nabla\varphi+\frac{n}{2}\varphi
=|x|^{-n/2}x\cdot\nabla(|x|^{n/2}\varphi),
\end{equation}
(\ref{eqn:3.6}) holds if and only if there exists a function $f:S^{n-1}\to\mathbb{C}$ satisfying
\begin{equation}
\varphi(x)=|x|^{-n/2}f\Bigl(\frac{x}{|x|}\Bigr)
\end{equation}
for all $x\in\mathbb{R}^n\setminus\{0\}$, where $S^{n-1}$ is the unit sphere $S^{n-1}=\{x\in\mathbb{R}^n;|x|=1\}$.
Then, $\varphi\in L^2(\mathbb{R}^n)$ if and only if
\begin{equation}
\int_{S^{n-1}}|f(\omega)|^2\,d\sigma(\omega)=0,
\end{equation}
which in turn is equivalent to $f=0$ and to $\varphi=0$, where $\sigma$ is the surface element, namely, the Lebesgue measure on $S^{n-1}$.
This proves the nonexistence of nontrivial extremizers of (\ref{eqn:3.7}).
\end{proof}

In \cite{ref:OzawaSasaki}, it has been proved that the inequality (\ref{eqn:3.7}) is equivalent to the standard Hardy inequality for $n\ge3$.
The following theorem describes such relationship at the level of equalities:
\begin{theorem}\label{thm:3.3}
Let $n\ge3$.
Then, the equality
\begin{equation}
\Bigl\|\frac{x}{|x|}\cdot\nabla\psi\Bigr\|^2
=\Bigl\|
\frac{x}{|x|}\cdot\nabla\psi
+\frac{n-2}{2|x|}\psi
\Bigr\|^2
+\left(
\frac{n-2}{2}
\right)^2
\Bigl\|\frac{\psi}{|x|}\Bigr\|^2
\label{eqn:3.8}
\end{equation}
for all $\psi\in D(A)$ follows from (\ref{eqn:3.5}).
Conversely, (\ref{eqn:3.8}) implies (\ref{eqn:3.5}).
\end{theorem}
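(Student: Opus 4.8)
The plan is to reduce each of (\ref{eqn:3.5}) and (\ref{eqn:3.8}) to an ``orthogonality relation'' of the type (\ref{eqn:3.4}) by completing the square, and then to observe that the substitution $\varphi=|x|^{-1}\psi$ transforms one such relation into the other. The algebraic part will be immediate; the real work is a density argument, and this is where the hypothesis $n\ge3$ will genuinely enter.

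First I would record the two ``completed-square'' equivalences. Expanding the norm squares, (\ref{eqn:3.5}) is equivalent to $\Re\bigl(x\cdot\nabla\varphi+\tfrac n2\varphi\,\big|\,\varphi\bigr)=0$, that is, to (\ref{eqn:3.4}); and, since $n\ge3$ permits division by $n-2$, equality (\ref{eqn:3.8}) is equivalent to $\Re\bigl(\tfrac{x}{|x|}\cdot\nabla\psi+\tfrac{n-2}{2|x|}\psi\,\big|\,\tfrac{\psi}{|x|}\bigr)=0$ (both factors lie in $L^2(\mathbb{R}^n)$: $\nabla\psi\in L^2$, and $\psi/|x|\in L^2$ by Hardy's inequality for $n\ge3$). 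Next, for $\psi\in C_0^\infty(\mathbb{R}^n\setminus\{0\})$ I would set $\varphi=|x|^{-1}\psi$, which again lies in $C_0^\infty(\mathbb{R}^n\setminus\{0\})\subset D(A)$ with $x\cdot\nabla\varphi\in L^2$; from $x\cdot\nabla|x|^{-1}=-|x|^{-1}$ one computes $x\cdot\nabla\varphi+\tfrac n2\varphi=|x|^{-1}\bigl(x\cdot\nabla\psi+\tfrac{n-2}{2}\psi\bigr)=\tfrac{x}{|x|}\cdot\nabla\psi+\tfrac{n-2}{2|x|}\psi$ and $\varphi=\psi/|x|$, so (\ref{eqn:3.4}) for $\varphi$ is \emph{verbatim} the orthogonality relation attached to (\ref{eqn:3.8}) for $\psi$. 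Then the forward implication drops out: (\ref{eqn:3.5}), hence (\ref{eqn:3.4}), applied to the admissible $\varphi=|x|^{-1}\psi$ gives (\ref{eqn:3.8}) for every $\psi\in C_0^\infty(\mathbb{R}^n\setminus\{0\})$; since this space is dense in $H^1(\mathbb{R}^n)=D(A)$ (points have zero $H^1$-capacity for $n\ge2$) and both sides of (\ref{eqn:3.8}) are continuous in the $H^1$-norm (again by Hardy, $\|\psi/|x|\|\le\tfrac{2}{n-2}\|\nabla\psi\|$), equality (\ref{eqn:3.8}) extends to all $\psi\in D(A)$.

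For the converse I would run the computation backwards: given (\ref{eqn:3.8}) on $D(A)$ and $\varphi\in C_0^\infty(\mathbb{R}^n\setminus\{0\})$, put $\psi=|x|\varphi\in C_0^\infty(\mathbb{R}^n\setminus\{0\})\subset D(A)$, so that the same identities yield (\ref{eqn:3.4}) and hence (\ref{eqn:3.5}) for $\varphi$; then I would extend (\ref{eqn:3.5}) to every $\varphi\in D(A)$ with $x\cdot\nabla\varphi\in L^2$ by density of $C_0^\infty(\mathbb{R}^n\setminus\{0\})$ in the graph-norm space $\{\varphi\in H^1(\mathbb{R}^n):x\cdot\nabla\varphi\in L^2(\mathbb{R}^n)\}$. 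I expect this last density to be the main obstacle. After a routine mollification and truncation one reduces to $\varphi\in C_0^\infty(\mathbb{R}^n)$ and must excise a neighborhood of the origin by a radial cutoff $(1-\eta(x/\epsilon))\varphi$ without inflating $\|\nabla\varphi\|$ or $\|x\cdot\nabla\varphi\|$: the correction in $x\cdot\nabla$ is $-(x/\epsilon)\cdot(\nabla\eta)(x/\epsilon)\,\varphi$, bounded and supported in a shrinking annulus $|x|\sim\epsilon$, while the correction in $\nabla$ is $\epsilon^{-1}(\nabla\eta)(x/\epsilon)\,\varphi$, whose $L^2$-norm is $O\bigl(\epsilon^{(n-2)/2}\bigr)$ and therefore tends to $0$ precisely because $n\ge3$ — which is also the arithmetical reason the constant $(n-2)/2$ appears throughout (\ref{eqn:3.8}). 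Once this density step is secured, (\ref{eqn:3.5}) follows from (\ref{eqn:3.8}) and the argument is complete.
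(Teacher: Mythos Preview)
Your proof is correct and rests on the same two ingredients as the paper's: the substitution $\varphi=|x|^{-1}\psi$ (and its inverse $\psi=|x|\varphi$) on $C_0^\infty(\mathbb{R}^n\setminus\{0\})$, together with the density of $C_0^\infty(\mathbb{R}^n\setminus\{0\})$ in $H^1(\mathbb{R}^n)$ for $n\ge3$. The organizing principle differs slightly. The paper transforms (\ref{eqn:3.5}) into (\ref{eqn:3.8}) term by term: it computes $\|x\cdot\nabla\varphi\|^2$ in the $\psi$-variables via Gauss' divergence theorem, picking up an extra $(n-1)\bigl\|\psi/|x|\bigr\|^2$, and then closes with the arithmetic identity $(n/2)^2-(n-1)=((n-2)/2)^2$. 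You instead first collapse each of (\ref{eqn:3.5}) and (\ref{eqn:3.8}) to its underlying orthogonality relation by expanding the squared norms, and then observe that the substitution carries one orthogonality relation \emph{verbatim} into the other; this bypasses both the integration by parts and the arithmetic identity. Your density discussion is also more explicit than the paper's --- in particular you identify that the converse direction requires approximation in the graph norm of $\{\varphi\in H^1:x\cdot\nabla\varphi\in L^2\}$ and sketch why the origin-excision step succeeds exactly when $n\ge3$, a point the paper leaves implicit.
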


\begin{proof}
Since $C_0^\infty(\mathbb{R}^n\setminus\{0\})$ is dense in $H^1(\mathbb{R}^n)=D(A)$ for $n\ge3$, we may assume that $\varphi,\psi\in C_0^\infty(\mathbb{R}^n\setminus\{0\})$. 
The following calculations are justified as long as $\varphi,\psi\in C_0^\infty(\mathbb{R}^n\setminus\{0\})$ without restriction on the space dimensions.
First, suppose that (\ref{eqn:3.5}) holds for all $\varphi\in C_0^\infty(\mathbb{R}^n\setminus\{0\})$.
Let $\psi\in C_0^\infty(\mathbb{R}^n\setminus\{0\})$.
Then, we define $\varphi$ by $\varphi(x)=\frac{1}{|x|}\psi(x)$, $x\in\mathbb{R}^n\setminus\{0\}$.
It follows that $\varphi\in C_0^\infty(\mathbb{R}^n\setminus\{0\})$ and the left-hand side of (\ref{eqn:3.5}) is rewritten as
\begin{align}
\|x\cdot\nabla\varphi\|^2
&=\Bigl\|
x\cdot\nabla\frac{\psi}{|x|}
\Bigr\|^2
\nonumber\\
&=\Bigl\|
\frac{x}{|x|}\cdot\nabla\psi
-\frac{1}{|x|}\psi
\Bigr\|^2
\nonumber\\
&=
\Bigl\|
\frac{x}{|x|}\cdot\nabla\psi
\Bigr\|^2
-2\Re\int\frac{x}{|x|}\cdot\nabla\psi
\,
\overline{\frac{1}{|x|}\psi}
\,dx
+\Bigl\|
\frac{\psi}{|x|}
\Bigr\|^2
\nonumber\\
&=
\Bigl\|
\frac{x}{|x|}\cdot\nabla\psi
\Bigr\|^2
-\int\frac{x}{|x|^2}\cdot\nabla|\psi|^2\,dx
+\Bigl\|
\frac{\psi}{|x|}
\Bigr\|^2
\nonumber\\
&=
\Bigl\|
\frac{x}{|x|}\cdot\nabla\psi
\Bigr\|^2
+\int\left(\div\frac{x}{|x|^2}\right)
|\psi|^2\,dx
+\Bigl\|
\frac{\psi}{|x|}
\Bigr\|^2
\nonumber\\
&=
\Bigl\|
\frac{x}{|x|}\cdot\nabla\psi
\Bigr\|^2
+(n-1)
\Bigl\|
\frac{\psi}{|x|}
\Bigr\|^2,
\label{eqn:3.9}
\end{align}
where we have used Gauss' divergence theorem, while the first term on the right-hand side of (\ref{eqn:3.5}) is rewritten as
\begin{align}
\Bigl\|
x\cdot\nabla\varphi
+\frac{n}{2}\varphi
\Bigr\|^2
&=
\Bigl\|
x\cdot\nabla\frac{\psi}{|x|}
+\frac{n}{2}\frac{\psi}{|x|}
\Bigr\|^2
\nonumber\\
&=
\Bigl\|
\frac{x}{|x|}\cdot\nabla\psi
+\frac{n-2}{2|x|}\psi
\Bigr\|^2.
\label{eqn:3.10}
\end{align}
Combining (\ref{eqn:3.9}) and (\ref{eqn:3.10}), we derive (\ref{eqn:3.8}) from (\ref{eqn:3.5}) with $\varphi=\frac{1}{|x|}\psi$, noticing that
\begin{equation}
\left(\frac{n}{2}\right)^2-(n-1)=\left(\frac{n-2}{2}\right)^2.
\end{equation}
Conversely, suppose that (\ref{eqn:3.8}) holds for all $\psi\in C_0^\infty(\mathbb{R}^n\setminus\{0\})$.
Let $\varphi\in C_0^\infty(\mathbb{R}^n\setminus\{0\})$.
Then, we define $\psi=|x|\varphi$, $x\in\mathbb{R}^n\setminus\{0\}$.
It follows that $\psi\in C_0^\infty(\mathbb{R}^n\setminus\{0\})$ and all the computations in (\ref{eqn:3.9}) and (\ref{eqn:3.10}) can be traced backward to imply (\ref{eqn:3.5}).
\end{proof}

In \cite{ref:ReedSimonII}, the standard Hardy type inequalities of the form
\begin{equation}
\Bigl\|
\frac{\psi}{|x|}
\Bigr\|
\le\frac{2}{n-2}\,\Bigl\|
\frac{x}{|x|}\cdot\nabla\psi
\Bigr\|
\le\frac{2}{n-2}\|
\nabla\psi
\|
\label{eqn:3.11}
\end{equation}
are referred to as the uncertainty principle lemma.
Here we have derived (\ref{eqn:3.11}) as a corollary to (\ref{eqn:3.4}), which is equivalent to (\ref{eqn:3.1}), which in turn is regarded as an original form of the uncertainty relation between the position and momentum operators.

\subsection{Generator of Dilations and Free Hamiltonian}
Let $A=\frac{1}{2i}(x\cdot\nabla+\nabla\cdot x)=-ix\cdot\nabla-i\frac{n}{2}$ and $B=-\Delta=-\nabla\cdot\nabla$ with natural domains
\begin{align}
D(A)
&
=\{
\varphi\in H^1(\mathbb{R}^n);\,
x\cdot\nabla\varphi\in L^2(\mathbb{R}^n)\},
\\
D(B)
&
=H^2(\mathbb{R}^n)
=\{
\varphi\in L^2(\mathbb{R}^n);\,
\partial_j\partial_k\varphi\in L^2(\mathbb{R}^n)\ \text{for all}\ j,k\ \text{with}\ 1\le j,k\le n
\}.
\end{align}
The operator $A$ is called the generator of dilations in the sense that one-parameter group of dilations $\{T(\theta); \theta\in\mathbb{R}\}$ defined by
\begin{equation}
(T(\theta)\varphi)(x)
=e^{\frac{n}{2}\theta} \varphi (e^\theta x),\qquad
x\in\mathbb{R}^n
\end{equation}
satisfies 
\begin{equation}
T'(0)\varphi
=\frac{d}{d\theta}T(\theta)\varphi\biggr|_{\theta=0}
=iA\varphi.
\end{equation}
The generator of dilations $A$ and the free Hamiltonian $B$ have a special commutation relation
\begin{equation}
[A,B]\varphi=2iB\varphi
\label{eqn:3.14}
\end{equation}
for smooth functions $\varphi\in C_0^\infty(\mathbb{R}^n;\mathbb{C})$, where the commutator is understood to be $AB-BA$ since $C_0^\infty(\mathbb{R}^n)\subset D(BA)\cap D(AB)\subset D(A)\cap D(B)$.

\begin{theorem}\label{thm:3.4}
Let  $\varphi\in M\equiv D(A)\cap D(B)$ satisfy $A\varphi\neq0$, $B\varphi\neq0$.
Then, 
\begin{align}
2\|\nabla\varphi\|^2
&=2(B\varphi|\varphi)
\label{eqn:3.15a}
\\
&
=-i([A,B]\varphi|\varphi)
\label{eqn:3.15b}
\\
&=-2\Im(A\varphi|B\varphi)
\label{eqn:3.15c}
\\
&=\|A\varphi\|\|B\varphi\|\left(
2-\left\|
\frac{A\varphi}{\|A\varphi\|}
+i\frac{B\varphi}{\|B\varphi\|}
\right\|^2
\right).
\label{eqn:3.15}
\end{align}
\end{theorem}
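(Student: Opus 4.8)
The plan is to recognize Theorem~\ref{thm:3.4} as a direct specialization of the general algebraic identities of Theorem~2.1 (specifically the equation \ref{eqn:2.10}$_+$ and the chain \ref{eqn:2.5}) applied to the operators $A=\frac{1}{2i}(x\cdot\nabla+\nabla\cdot x)$ and $B=-\Delta$, combined with the single additional input that these operators satisfy the commutation relation \ref{eqn:3.14}. As in the treatment of the position--momentum pair, I would first do all computations on $C_0^\infty(\mathbb{R}^n;\mathbb{C})$, where $[A,B]=AB-BA$ in the ordinary sense, and then extend to $\varphi\in M$ by density.

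The key steps, in order, would be the following. First, establish \ref{eqn:3.15a}: since $B=-\Delta$, integration by parts gives $(B\varphi|\varphi)=(-\Delta\varphi|\varphi)=(\nabla\varphi|\nabla\varphi)=\|\nabla\varphi\|^2$ for $\varphi\in C_0^\infty$, hence $2(B\varphi|\varphi)=2\|\nabla\varphi\|^2$. Second, pass to \ref{eqn:3.15b}: pair \ref{eqn:3.14} with $\varphi$ to get $([A,B]\varphi|\varphi)=2i(B\varphi|\varphi)$, so $-i([A,B]\varphi|\varphi)=2(B\varphi|\varphi)=2\|\nabla\varphi\|^2$. Third, get \ref{eqn:3.15c} for free from the general identity \ref{eqn:2.5}, which states $([A,B]\varphi|\varphi)=-2i\,\Im(A\varphi|B\varphi)$, hence $-i([A,B]\varphi|\varphi)=-2\,\Im(A\varphi|B\varphi)$. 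Fourth, obtain \ref{eqn:3.15}: this is exactly the content of \ref{eqn:2.10}$_+$, namely $i([A,B]\varphi|\varphi)=\|A\varphi\|\|B\varphi\|\bigl(2-\|\frac{A\varphi}{\|A\varphi\|}-i\frac{B\varphi}{\|B\varphi\|}\|^2\bigr)$; wait—one must be careful with the sign convention, since \ref{eqn:2.10}$_\pm$ reads $\pm i([A,B]\varphi|\varphi)=\|A\varphi\|\|B\varphi\|(2-\|\frac{A\varphi}{\|A\varphi\|}\mp i\frac{B\varphi}{\|B\varphi\|}\|^2)$, so taking the lower sign gives $-i([A,B]\varphi|\varphi)$ on the left and $+i\frac{B\varphi}{\|B\varphi\|}$ inside the norm on the right, matching \ref{eqn:3.15} precisely. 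Finally, invoke density: all four quantities are continuous in $\varphi$ with respect to the graph norms of $A$ and $B$ on $M$, and $C_0^\infty(\mathbb{R}^n)$ is a core for both operators, so the identities persist on all of $M$ under the hypotheses $A\varphi\neq0$, $B\varphi\neq0$ (which are only needed to make the normalized vectors in \ref{eqn:3.15} meaningful).

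The only genuinely nontrivial step is verifying the commutation relation \ref{eqn:3.14} on $C_0^\infty$, i.e.\ that $[A,-\Delta]\varphi=-2i\Delta\varphi$. Writing $A=-i(x\cdot\nabla+\frac{n}{2})$, one computes $[x\cdot\nabla,\Delta]$ directly: for each coordinate, $[x_j\partial_j,\partial_k^2]=-2\delta_{jk}\partial_k^2$ by the Leibniz rule applied twice, so summing over $j$ and $k$ gives $[x\cdot\nabla,\Delta]=-2\Delta$, while $[\frac{n}{2},\Delta]=0$; hence $[A,-\Delta]=i[x\cdot\nabla,\Delta]=-2i\Delta=2iB$. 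This is the standard fact that $-\Delta$ has dilation weight $2$, and it is entirely routine; the rest of the proof is bookkeeping with the already-established Theorem~2.1. Thus I do not anticipate any real obstacle—the main point of the theorem is conceptual, namely that the "special commutation relation" \ref{eqn:3.14} turns the abstract uncertainty identity into a concrete statement about $\|\nabla\varphi\|^2$, the generator of dilations, and the free Hamiltonian.
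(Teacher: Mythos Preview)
Your proposal is correct and follows essentially the same route as the paper: invoke the commutation relation \eqref{eqn:3.14}, the identity $\|\nabla\varphi\|^2=-(\Delta\varphi|\varphi)$, the general formula \eqref{eqn:2.5}, and the lower-sign case (2.10)$_-$ of Theorem~2.1. Your additional verification of \eqref{eqn:3.14} and the explicit density remark go slightly beyond what the paper writes out, but the argument is the same.
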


\begin{remark}
As a direct consequence, we have the inequality
\begin{equation}
\|\nabla\varphi\|^2
\le\Bigl\|
x\cdot\nabla\varphi+\frac{n}{2}\varphi\Bigr\|
\|\Delta\varphi\|,
\label{eqn:3.16}
\end{equation}
which might be new.
The inequality (\ref{eqn:3.16}) relates the information given by the momentum operator, the generator of the dilations, and the free Hamiltonian.
\end{remark}

\begin{remark}
By (\ref{eqn:3.5}), we already know that
\begin{equation}
\|A\varphi\|^2
=
\|x\cdot\nabla\varphi\|^2
-\left(\frac{n}{2}\right)^2\|\varphi\|^2,
\end{equation}
which implies (\ref{eqn:3.7}) directly, as stated in Remark \ref{rmk:3.2}.
\end{remark}

\begin{proof}[Proof of Theorem \ref{thm:3.4}]
The equalities in (\ref{eqn:3.15a})--(\ref{eqn:3.15}) follow from (\ref{eqn:3.14}), (\ref{eqn:2.5}), (2.10)$_-$, and the equality
\begin{equation}
\|\nabla\varphi\|^2
=-(\Delta\varphi|\varphi).
\end{equation}
\end{proof}

\subsection{Radial Derivative and Coulomb Potential}
Let $n\ge3$ as in Theorem \ref{thm:3.3}.
Let $A=\frac{1}{2i}(\frac{x}{|x|}\cdot\nabla+\nabla\cdot\frac{x}{|x|})=-i\frac{x}{|x|}\cdot\nabla-i\frac{n-1}{2|x|}$ and $B=\frac{1}{|x|}$ with natural domains
\begin{align}
D(A)
&
=\{
\varphi\in L^2(\mathbb{R}^n);\,
\tfrac{x}{|x|}\cdot\nabla\varphi,\tfrac{1}{|x|}\varphi\in L^2(\mathbb{R}^n)\},
\\
D(B)
&
=\{
\varphi\in L^2(\mathbb{R}^n);\,
\tfrac{1}{|x|}\varphi\in L^2(\mathbb{R}^n)
\}.
\end{align}
The operator $A$ is regarded as a symmetrized radial derivative defined by $\partial_r\equiv\frac{x}{|x|}\cdot\nabla$.
The squared length of the gradient has a pointwise decomposition
\begin{equation}
|\nabla\varphi|^2
=|\partial_r\varphi|^2
+\sum_{j=1}^n|L_j\varphi|^2,
\label{eqn:3.17}
\end{equation}
where $L_j$ is the $j$th component of the spherical derivative defined by
\begin{equation}
L_j\equiv\partial_j-\frac{x_j}{|x|}\partial_r.
\end{equation}
At the point $x\in\mathbb{R}^n$, the unit outer vector is given by $\frac{x}{|x|}$ and it is orthogonal to $e_j-\frac{x_j}{|x|}\frac{x}{|x|}$, $1\le j\le n$, where $e_j=(0,\ldots,0,1,0,\ldots,0)$ is the standard unit vector in the $j$th direction.
The corresponding one-parameter family of operators acting on functions are given by
\begin{align}
(T(\theta)\varphi)(x)&=\varphi\Bigl(x+\theta\frac{x}{|x|}\Bigr),\\
(T_j(\theta)\varphi)(x)&=\varphi\Bigl(x+\theta\,\Bigl(e_j-\frac{x_j}{|x|}\frac{x}{|x|}\Bigr)\Bigr),
\end{align}
which satisfy
\begin{align}
T'(0)\varphi=\frac{d}{d\theta}T(\theta)\varphi\biggr|_{\theta=0}
=\partial_r\varphi,\\
T_j'(0)\varphi=\frac{d}{d\theta}T_j(\theta)\varphi\biggr|_{\theta=0}
=L_j\varphi.
\end{align}
The (symmetrized) radial derivative and the Coulomb potential have a special commutation relation
\begin{equation}
[A,B]\varphi=iB^2\varphi
\label{eqn:3.18}
\end{equation}
for smooth functions $\varphi\in C_0^\infty(\mathbb{R}^n\setminus\{0\};\mathbb{C})$, where the commutator is understood to be $AB-BA$ since $C_0^\infty(\mathbb{R}^n\setminus\{0\})\subset D(BA)\cap D(AB)\subset D(A)\cap D(B)$.

\begin{theorem}\label{thm:3.5}
Let $n\ge3$ and let $\varphi\in H^1(\mathbb{R}^n)$ satisfy $A\varphi\neq0$, $B\varphi\neq0$.
Then, 
\begin{align}
\|B\varphi\|^2
&=-i([A,B]\varphi|\varphi)
\label{eqn:3.19a}
\\
&=-2\Im(A\varphi|B\varphi)
\label{eqn:3.19b}
\\
&=\|A\varphi\|\|B\varphi\|\left(
2-\left\|
\frac{A\varphi}{\|A\varphi\|}
+i\frac{B\varphi}{\|B\varphi\|}
\right\|^2
\right),
\label{eqn:3.19}
\\
\|A\varphi\|^2
&=\|\partial_r\varphi\|^2
-\frac{(n-1)(n-3)}{4}\|B\varphi\|^2.
\label{eqn:3.20}
\end{align}
\end{theorem}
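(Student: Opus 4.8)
The plan is to follow the pattern of the proof of Theorem~\ref{thm:3.4}: the three equalities (\ref{eqn:3.19a})--(\ref{eqn:3.19}) will be read off from the commutation relation (\ref{eqn:3.18}) together with the abstract identities of Section~\ref{sec:UncertaintyRelations}, while (\ref{eqn:3.20}) will come from a single integration by parts. All computations will first be performed for $\varphi\in C_0^\infty(\mathbb{R}^n\setminus\{0\})$ and then extended to $\varphi\in H^1(\mathbb{R}^n)$ by density, which is available precisely because $n\ge3$; for such $n$, Hardy's inequality gives $B\varphi=\varphi/|x|\in L^2(\mathbb{R}^n)$ and the pointwise bound $|\partial_r\varphi|\le|\nabla\varphi|$ gives $\partial_r\varphi\in L^2(\mathbb{R}^n)$, so that $H^1(\mathbb{R}^n)\subset D(A)\cap D(B)$ and all the quantities in the statement are well defined.

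First I would establish (\ref{eqn:3.19a}). Since $B=1/|x|$ is real and symmetric, $(B^2\varphi|\varphi)=(B\varphi|B\varphi)=\|B\varphi\|^2$, so the commutation relation (\ref{eqn:3.18}) gives $([A,B]\varphi|\varphi)=i\|B\varphi\|^2$ and hence $-i([A,B]\varphi|\varphi)=\|B\varphi\|^2$. Equality (\ref{eqn:3.19b}) is then immediate from the identity (\ref{eqn:2.5}), by which $([A,B]\varphi|\varphi)=-2i\,\Im(A\varphi|B\varphi)$, and (\ref{eqn:3.19}) is just the special case $(2.10)_-$ of the first theorem of Section~\ref{sec:UncertaintyRelations}, applied with the present $A$ and $B$.

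Next I would prove (\ref{eqn:3.20}). Writing $A\varphi=-i\bigl(\partial_r\varphi+\tfrac{n-1}{2}B\varphi\bigr)$ and expanding the square gives
\[
\|A\varphi\|^2=\|\partial_r\varphi\|^2+(n-1)\,\Re(\partial_r\varphi|B\varphi)+\tfrac{(n-1)^2}{4}\|B\varphi\|^2.
\]
The cross term is handled by the divergence theorem: $2\,\Re(\partial_r\varphi|B\varphi)=\int\frac{x}{|x|^2}\cdot\nabla|\varphi|^2\,dx=-\int\bigl(\div\tfrac{x}{|x|^2}\bigr)|\varphi|^2\,dx$, and since $\div\tfrac{x}{|x|^2}=\tfrac{n-2}{|x|^2}$ on $\mathbb{R}^n\setminus\{0\}$ this equals $-(n-2)\|B\varphi\|^2$, so $\Re(\partial_r\varphi|B\varphi)=-\tfrac{n-2}{2}\|B\varphi\|^2$. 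Substituting and using $\tfrac{(n-1)^2}{4}-\tfrac{(n-1)(n-2)}{2}=-\tfrac{(n-1)(n-3)}{4}$ yields (\ref{eqn:3.20}).

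I do not expect a genuine obstacle here: the real content is the elementary identity $\div(x/|x|^2)=(n-2)/|x|^2$ and the bookkeeping of the constants. The only point needing a little care is that the integration by parts must be carried out on $C_0^\infty(\mathbb{R}^n\setminus\{0\})$, where no boundary contribution at the origin arises, and that the resulting equalities pass to the $H^1$-closure; this is exactly where the hypothesis $n\ge3$ --- guaranteeing both the density of $C_0^\infty(\mathbb{R}^n\setminus\{0\})$ in $H^1(\mathbb{R}^n)$ and the validity of Hardy's inequality --- is used.
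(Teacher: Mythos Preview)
Your proposal is correct and follows essentially the same route as the paper: the chain (\ref{eqn:3.19a})--(\ref{eqn:3.19}) is obtained from the commutation relation (\ref{eqn:3.18}) together with (\ref{eqn:2.5}) and (2.10)$_-$, and (\ref{eqn:3.20}) by expanding $\|A\varphi\|^2=\|\partial_r\varphi+\tfrac{n-1}{2}B\varphi\|^2$ and integrating the cross term by parts via $\div(x/|x|^2)=(n-2)/|x|^2$. Your explicit remarks on density of $C_0^\infty(\mathbb{R}^n\setminus\{0\})$ in $H^1(\mathbb{R}^n)$ and the role of $n\ge3$ only make the paper's implicit reference to the proof of Theorem~\ref{thm:3.3} more precise.
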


\begin{remark}
As a direct consequence of (\ref{eqn:3.19}), we have the inequality
\begin{equation}
\Bigl\|\frac{1}{|x|}\varphi\Bigr\|
=\|B\varphi\|
\le2\|A\varphi\|
=2\,\Bigl\|
\partial_r\varphi+\frac{n-1}{2|x|}\varphi\Bigr\|,
\label{eqn:3.21}
\end{equation}
which might be new.
The inequality (\ref{eqn:3.21}) relates the information given by the radial derivative and Coulomb potential.
To be more specific, (\ref{eqn:3.21}) shows that the Coulomb potential $B$ is $A$ (symmetrized radial derivative)-bounded with relative bound 2.
\end{remark}

\begin{proof}[Proof of Theorem \ref{thm:3.5}]
The equalities in (\ref{eqn:3.19a})--(\ref{eqn:3.19}) follow from (\ref{eqn:3.18}), (\ref{eqn:2.5}), and (2.10)$_-$.
In the same way as in the proof of Theorem \ref{thm:3.3} we calculate 
\begin{align}
\|A\varphi\|^2
&=\Bigl\|
\frac{x}{|x|}\cdot\nabla\varphi+\frac{n-1}{2|x|}\varphi\Bigr\|^2
\nonumber\\
&=\Bigl\|\frac{x}{|x|}\cdot\nabla\varphi\Bigr\|^2
+(n-1)\Re\int\frac{x}{|x|}\cdot\nabla\varphi\,\overline{\frac{1}{|x|}\varphi}\,dx
+\left(
\frac{n-1}{2}
\right)^2
\,\Bigl\|\frac{1}{|x|}\varphi\Bigr\|^2
\nonumber\\
&=\Bigl\|\frac{x}{|x|}\cdot\nabla\varphi\Bigr\|^2
+\frac{n-1}{2}\int\frac{x}{|x|^2}\cdot\nabla|\varphi|^2\,dx
+\left(
\frac{n-1}{2}
\right)^2
\,\Bigl\|\frac{1}{|x|}\varphi\Bigr\|^2
\nonumber\\
&=\Bigl\|\frac{x}{|x|}\cdot\nabla\varphi\Bigr\|^2
-\frac{n-1}{2}\int\left(\div\frac{x}{|x|^2}\right)|\varphi|^2\,dx
+\left(
\frac{n-1}{2}
\right)^2
\,\Bigl\|\frac{1}{|x|}\varphi\Bigr\|^2
\end{align}
to obtain (\ref{eqn:3.20}).
\end{proof}

We now rewrite (\ref{eqn:3.18}) or (\ref{eqn:3.19b}) as
\begin{equation}
\Re(B\varphi-2i A\varphi|B\varphi)=0
\label{eqn:3.22}
\end{equation}
and regard (\ref{eqn:3.22}) as an orthogonality relation.
Then, as in \cite{ref:MachiharaOzawaWadadeSubmitted} we notice that (\ref{eqn:3.22}) yields a new equality,
\begin{align}
4\|A\varphi\|^2
&=\|
(2iA\varphi-B\varphi)+B\varphi
\|^2
\nonumber\\
&=\|2iA\varphi-B\varphi||^2
+\|B\varphi\|^2,
\end{align}
where the right-hand side is exactly the same as
\begin{equation}
4\,\Bigl\|
\frac{x}{|x|}\cdot\nabla\varphi
+\frac{n-2}{2|x|}\varphi
\Bigr\|^2
+\Bigl\|\frac{1}{|x|}\varphi\Bigr\|^2,
\end{equation}
while the left-hand side is rewritten as the right-hand side of (\ref{eqn:3.20}).
Therefore, we have proved 
\begin{equation}
4\|\partial_r\varphi\|^2
=4\,\Bigl\|
\frac{x}{|x|}\cdot\nabla\varphi
+\frac{n-2}{2|x|}\varphi
\Bigr\|^2
+(n-2)^2\,\Bigl\|
\frac{1}{|x|}\varphi
\Bigr\|^2,
\label{eqn:3.24}
\end{equation}
which is exactly the same as (\ref{eqn:3.8}).
We have thus derived the standard Hardy type inequality (the uncertainty principle lemma \cite{ref:ReedSimonII}) from the orthogonality (\ref{eqn:3.22}), which is equivalent to the commutation relation between the radial derivative and Coulomb potential (\ref{eqn:3.18}).
By (\ref{eqn:3.17}), the equality (\ref{eqn:3.8}) or (\ref{eqn:3.24}) is also rewritten as
\begin{align}
\|\nabla\varphi\|^2
-\sum_{j=1}^n\|L_j\varphi\|^2
&=\|\partial_r\varphi\|^2
\nonumber\\
&=\left(\frac{n-2}{2}\right)^2
\Bigl\|\frac{1}{|x|}\varphi\Bigr\|^2
+\Bigl\|\frac{x}{|x|}\cdot\nabla\varphi+\frac{n-2}{2|x|}\varphi
\Bigr\|^2.
\end{align}

\appendix
\renewcommand{\thetheorem}{\Alph{section}.\arabic{theorem}}
\renewcommand{\theremark}{\Alph{section}.\arabic{remark}}
\renewcommand{\thecorollary}{\Alph{section}.\arabic{corollary}}
\section{Basics of the Cauchy-Schwarz Inequality}
In this appendix, we summarize algebraic identities related to the Cauchy-Schwarz inequality.
For this purpose, we introduce sign function $\sgn:\mathbb{C}\to\mathbb{R}$ by
\begin{equation}
\sgn z
=\begin{cases}
\medskip
\displaystyle
z/|z|,&z\in\mathbb{C}\setminus\{0\},\\
\displaystyle
1,&z=0.
\end{cases}
\end{equation}

\begin{theorem}
The following equalities hold for all $u,v\in H\setminus\{0\}$:
\begin{align}
|(u|v)|
&=\|u\|\|v\|\left(
1-\frac{1}{2}\left\|
\frac{u}{\|u\|}
-[\sgn(u|v)]\frac{v}{\|v\|}
\right\|^2
\right),
\label{eqn:A.1}
\\
\pm\Re(u|v)
&=\|u\|\|v\|\left(
1-\frac{1}{2}\left\|
\frac{u}{\|u\|}
\mp\frac{v}{\|v\|}
\right\|^2
\right),
\stepcounter{equation}\tag*{(\theequation)$_\pm$}
\label{eqn:A.2}
\\
\pm\Im(u|v)
&=\|u\|\|v\|\left(
1-\frac{1}{2}\left\|
\frac{u}{\|u\|}
\mp i\frac{v}{\|v\|}
\right\|^2
\right).
\stepcounter{equation}\tag*{(\theequation)$_\pm$}
\label{eqn:A.3}
\end{align}
\end{theorem}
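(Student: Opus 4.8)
The plan is to reduce all three identities to one expansion of a squared norm. Write $\hat u=u/\|u\|$ and $\hat v=v/\|v\|$, so that $\|\hat u\|=\|\hat v\|=1$; this is legitimate since $u,v\in H\setminus\{0\}$. For an arbitrary unimodular scalar $\lambda\in\mathbb{C}$ ($|\lambda|=1$) I would expand, using sesquilinearity and the fact that the scalar product is antilinear in the second slot,
\[
\Bigl\|\hat u-\lambda\hat v\Bigr\|^2
=\|\hat u\|^2-\overline{\lambda}\,(\hat u|\hat v)-\lambda\,\overline{(\hat u|\hat v)}+|\lambda|^2\|\hat v\|^2
=2-2\Re\bigl(\lambda\,\overline{(\hat u|\hat v)}\bigr),
\]
where the last step uses $z+\bar z=2\Re z$ with $z=\lambda\,\overline{(\hat u|\hat v)}$. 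Hence, for every unimodular $\lambda$,
\[
1-\tfrac12\Bigl\|\hat u-\lambda\hat v\Bigr\|^2=\Re\bigl(\lambda\,\overline{(\hat u|\hat v)}\bigr),
\]
and multiplying by $\|u\|\|v\|$ together with the homogeneity $\|u\|\|v\|\,(\hat u|\hat v)=(u|v)$ turns the right-hand side into $\Re\bigl(\lambda\,\overline{(u|v)}\bigr)$.

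Then I would just specialize $\lambda$. Taking $\lambda=\pm1$ gives $\Re\bigl(\pm\overline{(u|v)}\bigr)=\pm\Re(u|v)$, which is (A.2)$_\pm$. Taking $\lambda=\pm i$ gives $\Re\bigl(\pm i\,\overline{(u|v)}\bigr)=\pm\Im(u|v)$, since $\Re(i\bar z)=\Im z$; this is (A.3)$_\pm$.

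For (A.1) I would take $\lambda=\sgn(u|v)$, which is unimodular thanks to the convention $\sgn0=1$. When $(u|v)\neq0$ one has $\sgn(u|v)\,\overline{(u|v)}=|(u|v)|\ge0$, a nonnegative real, so $\Re\bigl(\sgn(u|v)\,\overline{(u|v)}\bigr)=|(u|v)|$; when $(u|v)=0$ both sides of (A.1) vanish. Equivalently, and perhaps more cleanly, (A.1) follows from (A.2)$_+$ applied to the pair $u$, $w:=[\sgn(u|v)]v$: indeed $\|w\|=\|v\|$ and $(u|w)=\overline{\sgn(u|v)}\,(u|v)=|(u|v)|\ge0$, so $\Re(u|w)=(u|w)=|(u|v)|$ while $w/\|w\|=[\sgn(u|v)]\hat v$.

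There is no genuine obstacle here: the computation is a one-line polarization-type expansion. The only points needing a little care are bookkeeping — that the scalar product is antilinear in the second argument, so the cross terms are $\overline{\lambda}(\hat u|\hat v)$ and $\lambda\overline{(\hat u|\hat v)}$ rather than $\lambda(\hat u|\hat v)$ twice — and the degenerate case $(u|v)=0$ in (A.1), which is covered by the convention $\sgn0=1$.
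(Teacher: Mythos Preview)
Your proof is correct and follows essentially the same approach as the paper: both arguments rest on directly expanding the squared norm $\|\hat u-\lambda\hat v\|^2$ to obtain $2-2\Re(\lambda\,\overline{(\hat u|\hat v)})$, then reading off each identity. Your version is a touch more economical in that you introduce the unimodular parameter $\lambda$ once and specialize, whereas the paper first treats (A.1) and then derives (A.2)$_\pm$ and (A.3)$_\pm$ by analogous expansion or the substitutions $v\mapsto\pm v,\pm iv$; but the underlying computation is identical.
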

\begin{proof}
Let $(u|v)\neq0$.
Then, we expand the square of the last norm on the right-hand side of (\ref{eqn:A.1}) as 
\begin{align}
\left\|
\frac{u}{\|u\|}
-\frac{(u|v)}{|(u|v)|}\frac{v}{\|v\|}
\right\|^2
&=
\left\|
\frac{u}{\|u\|}
\right\|^2
-2\Re\!\left(\frac{u}{\|u\|}\bigg|\frac{(u|v)}{|(u|v)|}\frac{v}{\|v\|}
\right)
+\left\|
\frac{(u|v)}{|(u|v)|}\frac{v}{\|v\|}
\right\|^2
\nonumber\\
&=
2-2\frac{\Re[\overline{(u|v)}(u|v)]}{\|u\|\|v\||(u|v)|}
\nonumber\\
&=
2-2\frac{|(u|v)|}{\|u\|\|v\|},
\label{eqn:A.5}
\end{align}
which yields (\ref{eqn:A.1}).
If $(u|v)=0$, then a similar calculation yields (\ref{eqn:A.1}) in the trivial case.
Equalities \ref{eqn:A.2} can be proved in the same manner as (\ref{eqn:A.5}).
Or they follow from (\ref{eqn:A.1}) by regarding $H$ as a real vector space with scalar product
\begin{equation}
\Re(\,\cdot\,|\,\cdot\,)
:
H\times H\ni(u,v)
\mapsto
\Re(u|v)\in\mathbb{R},
\end{equation}
since the new scalar product $\Re(u|v)$ satisfies $\sgn\Re(u|v)=1$ if and only if $\Re(u|v)\ge0$ while $\sgn\Re(u|v)=-1$ if and only if $-{\Re(u|v)}=|{\Re(u|v)}|>0$. 
Substituting $v$ by $iv$ in \ref{eqn:A.2} implies \ref{eqn:A.3}.
\end{proof}

\begin{remark}
Equality (\ref{eqn:A.1}) is regarded as an equality version of the Cauchy-Schwarz inequality
\begin{equation}
|(u|v)|\le\|u\|\|v\|.
\end{equation}
Indeed, the former immediately implies the latter.
\end{remark}

\begin{remark}
Equalities (\ref{eqn:A.1}), (A.3)$_+$, and (A.4)$_+$ have been noticed by Aldaz \cite{ref:Aldaz2008,ref:Aldaz2009} and are verified by similar and simpler calculations as above, too.
See also \cite{ref:DuistermaatKolk2004,ref:FujiwaraOzawa} for related subjects.
\end{remark}

\begin{corollary}\label{cor:A.1}
The following equalities hold for all $u,v\in H\setminus\{0\}$:
\begin{align}
|(u|v)|
&=\|u\|\|v\|\left[
\left(
1-\frac{1}{2}\left\|
\frac{u}{\|u\|}
\pm\frac{v}{\|v\|}
\right\|^2
\right)^2
+\left(
1-\frac{1}{2}\left\|
\frac{u}{\|u\|}
\pm i\frac{v}{\|v\|}
\right\|^2
\right)^2
\right]^{1/2}
\stepcounter{equation}\tag*{(\theequation)$_\pm$}
\label{eqn:A.4}
\\
&=\|u\|\|v\|\left[
\left(
1-\frac{1}{2}\left\|
\frac{u}{\|u\|}
\pm\frac{v}{\|v\|}
\right\|^2
\right)^2
+\left(
1-\frac{1}{2}\left\|
\frac{u}{\|u\|}
\mp i\frac{v}{\|v\|}
\right\|^2
\right)^2
\right]^{1/2}
\stepcounter{equation}\tag*{(\theequation)$_\pm$}.
\end{align}
\end{corollary}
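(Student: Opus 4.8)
\textbf{Proof proposal for Corollary \ref{cor:A.1}.}
The plan is to reduce the two displayed identities directly to Theorem~A.1, specifically to the equalities \ref{eqn:A.2} and \ref{eqn:A.3}, together with the elementary fact $|(u|v)|^2 = (\Re(u|v))^2 + (\Im(u|v))^2$. First I would fix $u,v\in H\setminus\{0\}$ and observe that by \ref{eqn:A.2} we have
\begin{equation}
\pm\Re(u|v)=\|u\|\|v\|\left(1-\frac{1}{2}\left\|\frac{u}{\|u\|}\mp\frac{v}{\|v\|}\right\|^2\right),
\end{equation}
so that in particular
\begin{equation}
\left(\Re(u|v)\right)^2=\|u\|^2\|v\|^2\left(1-\frac{1}{2}\left\|\frac{u}{\|u\|}\pm\frac{v}{\|v\|}\right\|^2\right)^2,
\end{equation}
the sign being irrelevant after squaring. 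Likewise, from \ref{eqn:A.3},
\begin{equation}
\left(\Im(u|v)\right)^2=\|u\|^2\|v\|^2\left(1-\frac{1}{2}\left\|\frac{u}{\|u\|}\pm i\frac{v}{\|v\|}\right\|^2\right)^2,
\end{equation}
again independent of the overall sign.

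Next I would add these two expressions, factor out $\|u\|^2\|v\|^2$, and take the nonnegative square root, using $\|u\|\|v\|\ge 0$ and $|(u|v)|=\left((\Re(u|v))^2+(\Im(u|v))^2\right)^{1/2}$. This immediately gives
\begin{equation}
|(u|v)|=\|u\|\|v\|\left[\left(1-\frac{1}{2}\left\|\frac{u}{\|u\|}\pm\frac{v}{\|v\|}\right\|^2\right)^2+\left(1-\frac{1}{2}\left\|\frac{u}{\|u\|}\pm i\frac{v}{\|v\|}\right\|^2\right)^2\right]^{1/2},
\end{equation}
which is precisely \ref{eqn:A.4}. For the second displayed identity, I would note that the $\Im$ term may equally well be taken with the opposite internal sign, since squaring kills it; replacing $\pm i$ by $\mp i$ in the second summand therefore leaves the right-hand side unchanged, yielding the companion formula. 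One can also see this by substituting $v\mapsto \pm v$ and $v\mapsto \pm iv$ independently in \ref{eqn:A.2} and \ref{eqn:A.3}, since the choice of sign in each equation of Theorem~A.1 is free.

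There is essentially no obstacle here: the only point requiring a word of care is that the $\pm$ (resp.\ $\mp$) in each of the four norm terms can be chosen independently, because the sign disappears under squaring; this is what makes both stated versions (and their mixed variants) simultaneously valid. The corollary is thus a formal rearrangement of Theorem~A.1, and the proof is a two-line computation once \ref{eqn:A.2} and \ref{eqn:A.3} are in hand.
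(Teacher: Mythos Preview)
Your proposal is correct and follows essentially the same route as the paper: the paper's proof simply states that the corollary follows from \ref{eqn:A.2}, \ref{eqn:A.3}, and the identity $|(u|v)|=\bigl([\Re(u|v)]^2+[\Im(u|v)]^2\bigr)^{1/2}$, which is exactly the computation you spell out in detail. Your remark that the signs in the two squared terms may be chosen independently is a useful clarification of why both displayed versions hold.
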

\begin{proof}
The corollary follows from \ref{eqn:A.2}, \ref{eqn:A.3}, and the equality
\begin{equation}
|(u|v)|
=\Bigl([\Re(u|v)]^2+[\Im(u|v)]^2\Bigr)^{1/2}.
\end{equation}
\end{proof}

Corollary \ref{cor:A.1} is further generalized as:
\begin{corollary}\label{cor:A.2}
The following equality holds for all $u,v\in H\setminus\{0\}$ and $\theta\in\mathbb{R}$:
\begin{equation}
|(u|v)|
=\|u\|\|v\|\left[
\left(
1-\frac{1}{2}\left\|
\frac{u}{\|u\|}
+e^{i\theta}\frac{v}{\|v\|}
\right\|^2
\right)^2
+\left(
1-\frac{1}{2}\left\|
\frac{u}{\|u\|}
\pm ie^{i\theta}\frac{v}{\|v\|}
\right\|^2
\right)^2
\right]^{1/2}.
\stepcounter{equation}\tag*{(\theequation)$_\pm$}
\label{eqn:A.6}
\end{equation}
\end{corollary}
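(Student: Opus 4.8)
The plan is to read Corollary \ref{cor:A.2} off Corollary \ref{cor:A.1} by the single substitution $v\mapsto e^{i\theta}v$, using only the phase invariance of the quantities involved. First I would record the elementary facts that, for any fixed $\theta\in\mathbb{R}$ and any $v\in H\setminus\{0\}$, the vector $e^{i\theta}v$ again lies in $H\setminus\{0\}$, that the norm is unchanged, $\|e^{i\theta}v\|=\|v\|$, that the modulus of the scalar product is unchanged, $|(u|e^{i\theta}v)|=|e^{-i\theta}|\,|(u|v)|=|(u|v)|$, and that the normalized vector merely absorbs the phase, $\dfrac{e^{i\theta}v}{\|e^{i\theta}v\|}=e^{i\theta}\dfrac{v}{\|v\|}$. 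Consequently every identity already established for arbitrary nonzero arguments may be applied verbatim to the pair $(u,e^{i\theta}v)$.

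Next I would substitute $v\mapsto e^{i\theta}v$ into the upper-sign version of the first identity of Corollary \ref{cor:A.1}. On the left-hand side $|(u|e^{i\theta}v)|$ becomes $|(u|v)|$ and the prefactor $\|u\|\,\|e^{i\theta}v\|$ becomes $\|u\|\,\|v\|$, while inside the bracket the two norms turn into $\bigl\|\tfrac{u}{\|u\|}+e^{i\theta}\tfrac{v}{\|v\|}\bigr\|$ and $\bigl\|\tfrac{u}{\|u\|}+ie^{i\theta}\tfrac{v}{\|v\|}\bigr\|$; this is exactly \ref{eqn:A.6} with the upper sign. Performing the same substitution in the upper-sign version of the second identity of Corollary \ref{cor:A.1} leaves the first norm as $\bigl\|\tfrac{u}{\|u\|}+e^{i\theta}\tfrac{v}{\|v\|}\bigr\|$ but sends the second to $\bigl\|\tfrac{u}{\|u\|}-ie^{i\theta}\tfrac{v}{\|v\|}\bigr\|$, which is \ref{eqn:A.6} with the lower sign. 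Since $\theta$ was arbitrary, this proves the corollary.

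There is no genuine obstacle here: all the analytic content sits in Corollary \ref{cor:A.1}, and what remains is purely formal. The one point I would be careful about is the sign bookkeeping, namely why the first norm of \ref{eqn:A.6} carries a fixed $+$ rather than a $\pm$: once the free parameter $\theta$ is present, $-e^{i\theta}=e^{i(\theta+\pi)}$ (and, matching this shift, $\mp ie^{i\theta}=\pm ie^{i(\theta+\pi)}$), so the four sign combinations available in Corollary \ref{cor:A.1} collapse to the two genuinely distinct cases of \ref{eqn:A.6}, which is why taking the upper sign in each of the two forms of Corollary \ref{cor:A.1} suffices. An equivalent route, parallel to the proof of Corollary \ref{cor:A.1} itself, is to substitute $v\mapsto e^{i\theta}v$ directly in the real- and imaginary-part identities \ref{eqn:A.2} and \ref{eqn:A.3} and then recombine via $|(u|v)|=\bigl([\Re(u|v)]^2+[\Im(u|v)]^2\bigr)^{1/2}$; this exposes the origin of each of the two bracketed terms at the cost of repeating the assembly step.
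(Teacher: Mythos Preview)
Your proposal is correct and matches the paper's own proof essentially verbatim: the paper simply states that substituting $v$ by $e^{i\theta}v$ in (A.8)$_+$ yields (A.11)$_+$ and in (A.9)$_+$ yields (A.11)$_-$. Your additional remarks on phase invariance and on why the four sign combinations of Corollary~\ref{cor:A.1} collapse to two are accurate elaborations of what the paper leaves implicit.
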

\begin{proof}
Substituting $v$ by $e^{i\theta}v$ in (A.8)$_+$ [resp.\ (A.9)$_+$] yields (A.11)$_+$ [resp.\ (A.11)$_-$].
\end{proof}

\begin{remark}
Equality (A.8)$_+$ [resp.\ (A.9)$_+$] follows from (A.11)$_+$ [resp.\ (A.11)$_-$] with $\theta\in2\pi\mathbb{Z}$, while equality (A.8)$_-$ [resp.\ (A.9)$_-$] follows from (A.11)$_+$ [resp.\ (A.11)$_-$] with $\theta\in(2\mathbb{Z}+1)\pi$.
\end{remark}

\begin{theorem}
Let $u,v\in H$.
Then, the statements in each of the following Parts (1)--(5) are equivalent:
\begin{enumerate}
\item
\begin{enumerate}
\item $\Re(u|v)=\pm\|u\|\|v\|$.
\medskip
\item $\|v\|u=\pm\|u\|v$.
\medskip
\item $(u|v)=\pm\|u\|\|v\|$.
\end{enumerate}

\medskip
\item
\begin{enumerate}
\item $\Im(u|v)=\pm\|u\|\|v\|$.
\medskip
\item $\|v\|u=\pm i\|u\|v$.
\medskip
\item $(u|v)=\pm i\|u\|\|v\|$.
\end{enumerate}

\medskip
\item
\begin{enumerate}
\item $|{\Re(u|v)}|=\|u\|\|v\|$.
\medskip
\item $\Im(u|v)=0$, $|(u|v)|=\|u\|\|v\|$.
\medskip
\item $\|v\|^2u=[\Re(u|v)]v$.
\medskip
\item $\|u\|^2v=[\Re(u|v)]u$.
\end{enumerate}

\medskip
\item
\begin{enumerate}
\item $|{\Im(u|v)}|=\|u\|\|v\|$.
\medskip
\item $\Re(u|v)=0$, $|(u|v)|=\|u\|\|v\|$.
\medskip
\item $\|v\|^2u=[i\Im(u|v)]v$.
\medskip
\item $\|u\|^2v=-[i\Im(u|v)]u$.
\end{enumerate}

\medskip
\item
\begin{enumerate}
\item $|(u|v)|=\|u\|\|v\|$.
\medskip
\item $\|v\|u=[\sgn(u|v)]\|u\|v$.
\medskip
\item $\|v\|^2u=(u|v)v$.
\medskip
\item $\|u\|^2v=\overline{(u|v)}u$.
\end{enumerate}
\end{enumerate}
\end{theorem}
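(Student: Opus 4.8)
The plan is to derive all five parts from the equality versions of the Cauchy--Schwarz inequality established above, namely (\ref{eqn:A.1}) and (\ref{eqn:A.2})--(\ref{eqn:A.3}), together with the elementary device of testing a relation of the form $\|v\|u=\lambda\|u\|v$ against $v$ and against $u$. First I would dispose of the degenerate case: if $u=0$ or $v=0$, then every statement in each of Parts (1)--(5) holds trivially (both sides vanish, and $\sgn 0=1$ does no harm), so in that case all the stated equivalences hold. Hence from now on assume $u,v\in H\setminus\{0\}$, which is precisely the range in which the appendix identities apply.

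For Part (1) I would establish (i)$\Leftrightarrow$(ii), (ii)$\Rightarrow$(iii), and (iii)$\Rightarrow$(i). By (\ref{eqn:A.2}) with the relevant sign, the relation $\pm\Re(u|v)=\|u\|\|v\|$ holds if and only if the nonnegative quantity $\bigl\|u/\|u\|\mp v/\|v\|\bigr\|^{2}$ vanishes, i.e.\ if and only if $\|v\|u=\pm\|u\|v$; this gives (i)$\Leftrightarrow$(ii). Testing $\|v\|u=\pm\|u\|v$ against $v$ and dividing by $\|v\|$ yields $(u|v)=\pm\|u\|\|v\|$, which is (iii); and (iii)$\Rightarrow$(i) is immediate since the right-hand side is real. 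Part (2) is then obtained from Part (1) by replacing $v$ with $iv$: since $(u|iv)=-i(u|v)$, $\Re(u|iv)=\Im(u|v)$, and $\|iv\|=\|v\|$, statements (i)--(iii) of Part (1) turn into statements (i)--(iii) of Part (2).

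For Part (5) I would run the analogous cycle and then invoke symmetry for the fourth item. By (\ref{eqn:A.1}), $|(u|v)|=\|u\|\|v\|$ if and only if $\bigl\|u/\|u\|-[\sgn(u|v)]v/\|v\|\bigr\|=0$, i.e.\ if and only if $\|v\|u=[\sgn(u|v)]\|u\|v$; this is (i)$\Leftrightarrow$(ii). Testing (ii) against $v$ gives $(u|v)=[\sgn(u|v)]\|u\|\|v\|$, in particular $(u|v)\neq0$ since $(u|v)=0$ would force $u=0$; hence $[\sgn(u|v)]\|u\|\|v\|=(u|v)$, and multiplying (ii) by $\|v\|$ yields $\|v\|^{2}u=(u|v)v$, which is (iii). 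Testing (iii) against $u$ gives $\|u\|^{2}\|v\|^{2}=|(u|v)|^{2}$, which is (i). Finally, statement (i) and the equivalence (i)$\Leftrightarrow$(iii) just proved are symmetric under interchanging $u$ and $v$, and (iii) with $u$ and $v$ swapped reads $\|u\|^{2}v=\overline{(u|v)}u$, which is (iv); thus all of (i)--(iv) are equivalent.

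For Part (3) the starting point is the chain $|\Re(u|v)|\le|(u|v)|\le\|u\|\|v\|$. Statement (i) forces both inequalities to be equalities, which is precisely the conjunction $\Im(u|v)=0$ and $|(u|v)|=\|u\|\|v\|$, i.e.\ (ii); the converse is clear. Given (ii), Part (5) applies, and $(u|v)$ being real its items (iii)--(iv) read $\|v\|^{2}u=[\Re(u|v)]v$ and $\|u\|^{2}v=[\Re(u|v)]u$, i.e.\ (iii) and (iv) here. For the converse, test (iii) against $u$: then $\|u\|^{2}\|v\|^{2}=[\Re(u|v)]\overline{(u|v)}$, whose left side is real and strictly positive, so $[\Re(u|v)]\Im(u|v)=0$ and $\Re(u|v)\neq0$, whence $\Im(u|v)=0$ and $\|u\|^{2}\|v\|^{2}=[\Re(u|v)]^{2}$, which is (i); the implication (iv)$\Rightarrow$(i) is the same computation with $u$ and $v$ exchanged. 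Part (4) follows from Part (3) by $v\mapsto iv$, using $\Re(u|iv)=\Im(u|v)$ and $(u|iv)=-i(u|v)$, which carry items (i)--(iv) of Part (3) to items (i)--(iv) of Part (4). The bulk of the argument is routine bookkeeping; the one place that needs care is Part (5), where one must check $(u|v)\neq0$ before identifying $[\sgn(u|v)]\|u\|\|v\|$ with $(u|v)$, and obtain the transposed item (iv) from the $u\leftrightarrow v$ symmetry rather than by a fresh computation.
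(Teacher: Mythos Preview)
Your proof is correct and follows essentially the same route as the paper: dispose of the degenerate case, prove Part~(1) via the identity \ref{eqn:A.2} for (i)$\Leftrightarrow$(ii) and a test against $v$ for (ii)$\Rightarrow$(iii), reduce Part~(2) to Part~(1) by $v\mapsto iv$, handle Part~(5) via (\ref{eqn:A.1}) plus tests against $u$ and $v$ and $u\leftrightarrow v$ symmetry, derive Part~(3) from the chain $|\Re(u|v)|\le|(u|v)|\le\|u\|\|v\|$ together with Part~(5), and reduce Part~(4) to Part~(3) by $v\mapsto iv$. The only cosmetic difference is that in Part~(5) you make the nonvanishing of $(u|v)$ explicit before identifying $[\sgn(u|v)]\|u\|\|v\|$ with $(u|v)$, whereas the paper folds this into ``a direct calculation''; and in Part~(3)(iii)$\Rightarrow$(i) you extract both the real and imaginary parts of $[\Re(u|v)]\overline{(u|v)}$, while the paper simply takes the real part to reach~(i).
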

\begin{proof}
If $u=0$ or $v=0$, then all of the equalities in the theorem trivially hold.
Therefore, we assume that $u\neq0$ and $v\neq0$.
\begin{enumerate}
\item The equivalence between (i) and (ii) follows from \ref{eqn:A.2}.
Given (ii), we calculate
\begin{equation}
\|v\|(u|v)
=(\|v\|u|v)
=(\pm\|u\|v|v)
=\pm\|u\|\|v\|^2,
\end{equation}
which implies (iii) by dividing both sides by $\|v\|>0$.
Finally, (iii) implies (i) by taking the real part of $(u|v)$.

\medskip
\item Part (2) follows from Part (1) by replacing $v$ by $iv$.

\medskip
\item[(5)] The equivalence between (i) and (ii) follows from (\ref{eqn:A.1}).
Given (i) and (ii), a direct calculation yields (iii).
Given (iii), we calculate
\begin{equation}
\|v\|^2\|u\|^2
=(\|v\|^2u|u)
=((u|v)v|u)
=(u|v)(v|u)
=|(u|v)|^2,
\end{equation}
which implies (i) by taking its square root.
This proves the equivalence among (i)--(iii).
A similar argument shows the equivalence among (i), (ii) and (iv), or it follows by exchanging $u$ and $v$ in the preceding argument.

\medskip
\item Given (i), we have the Cauchy-Schwarz inequality
\begin{equation}
\|u\|\|v\|
=|{\Re(u|v)}|
\le\Bigl([\Re(u|v)]^2+[\Im(u|v)]^2\Bigr)^{1/2}
=|(u|v)|
\le\|u\|\|v\|,
\end{equation}
where all those inequalities turn out to be equalities, which in turn imply (ii).
Given (ii), we have by Part (5), which proved above
\begin{equation}
\|v\|^2u=(u|v)v,
\end{equation}
where the imaginary part of $(u|v)$ vanishes to imply (iii).
Given (iii), we calculate
\begin{equation}
\|v\|^2\|u\|^2
=(\|v\|^2u|u)
=([\Re(u|v)]v|u)
=[\Re(u|v)](v|u)
\end{equation}
and take its real part to obtain (i).
This proves the equivalence among (i)--(iii).
A similar argument shows the equivalence among (i), (ii), and (iv), or it follows by exchanging $u$ and $v$ in the preceding argument.

\medskip
\item Part (4) follows from Part (3) by replacing $v$ by $iv$.
\end{enumerate}
\end{proof}

\section*{Acknowledgments}
This work is partially supported by the Top Global University Project from the Ministry of Education, Culture, Sports, Science and Technology (MEXT), Japan.
TO is supported by a Grant-in-Aid for Scientific Research (A) (No.\ 26247014) from Japan Society for the Promotion of Science (JSPS).
KY is supported by a Grant-in-Aid for Scientific Research (C) (No.\ 26400406) from JSPS and by Waseda University Grants for Special Research Projects (No.\ 2015K-202 and No.\ 2016K-215).


\end{document}